\pdfoutput=1
\documentclass[12pt,oneside]{amsart}
\usepackage{placeins, comment}
\usepackage[margin=1in]{geometry}
\usepackage{mathabx}
\usepackage{latexsym,epsfig,enumitem,multicol,wasysym,amsmath,amsthm,amssymb,amsfonts,amscd,ulem,soul,multirow,rotating,pdflscape,float,dsfont}
\setlist{itemsep=.06125in}

\usepackage{graphicx}
\usepackage[colorlinks,citecolor=red,pagebackref,hypertexnames=false]{hyperref}
\hypersetup{%
  colorlinks = true,
 citecolor=red,
  linkcolor  = red
}
\restylefloat{table}
\numberwithin{equation}{section}
\theoremstyle{plain}
\newtheorem{theorem}{Theorem}[section]
\newtheorem{lemma}[theorem]{Lemma}
\newtheorem{corollary}[theorem]{Corollary}
\newtheorem{proposition}[theorem]{Proposition}

\theoremstyle{definition}
\newtheorem{definition}[theorem]{Definition}

\theoremstyle{remark}
\newtheorem{remark}[theorem]{Remark}

\date{\today}      
\author{A. Iosevich and A. Yavicoli}
\address{Department of Mathematics, University of Rochester, Rochester, NY, USA}
\email{iosevich@gmail.com}
\address{Department of Mathematics, University of British Columbia, Vancouver, BC, Canada}
\email{yavicoli@math.ubc.ca} 

\thanks{A.~I. was supported in part by the National Science Foundation under NSF DMS - 2154232.}

\thanks{A.~Y. was supported in part by the Natural Sciences and Engineering Research Council of Canada, NSERC (GR030571 and GR030540).}

\title[Falconer lattice sets and the Erdos similarity problem]{Falconer lattice sets and the Erdos similarity problem}

\begin{document} 

\begin{abstract}
We show that a family of extremely thin sets satisfy the Erd\H{o}s similarity conjecture. These examples lie outside the range covered by recent work of Shmerkin and Yavicoli \cite{ShmerkinYavicoli2025}. As we shall see, they have small logarithmic dimension. They do not contain affine copies of slowly decaying sequences, so the result does not follow from earlier work of Falconer and Eigen \cite{Falconer1984,Eigen}. On the other hand, they do contain sequences of rapid decay, for which the conjecture is still open in general. Our argument is based on Falconer lattice sets and a theorem of Bourgain \cite{Bourgain2003}.

More precisely, for a monotone increasing function $\phi$ tending to infinity, we consider the intersection of neighborhoods of the lattices
$$
\frac{1}{q_i}\big([0,q_i]\cap {\mathbb Z}\big),
\qquad q_1=2, \qquad q_{i+1}=q_i^{M_i},
$$
with neighborhood radii $q_i^{-\phi(q_i)}$. We show that if
$\phi(q_i)<M_i-1$ for all sufficiently large $i$, then the resulting intersection set $E$ contains an infinite set $A$ such that
$$
A+A+A\subset E.
$$
By Bourgain's theorem \cite{Bourgain2003}, it follows that $E$ satisfies the conclusion of the Erd\H{o}s similarity conjecture.

We also prove a structural dichotomy. If $\phi(q_i)>M_i$
for all sufficiently large $i$, then the intersection $E$ is finite. If $\phi(q_i)<M_i-1$ for all sufficiently large $i$, then $E$ is uncountable and contains an infinite set $A$ with
$$
A+A+A\subset E.
$$
In addition, we show that these sets avoid the previously studied slowly decaying sequence regime $\frac{a_{n+1}}{a_n}\to 1,$ so the mechanism forcing the Erd\H{o}s similarity property here comes from additive branching generated by the nested lattice structure rather than from slow decay of sequences.

We also show that Bourgain's result on sumsets is not enough to prove the Erd\H{o}s similarity conjecture for Cantor sets.

In addition, we show another family of Cantor sets with logarithmic dimension $0$ that satisfy the Erd\H{o}s similarity conjecture (these are not built using Falconer's lattice sets).
\end{abstract}

\subjclass[2020]{Primary 28A78; Secondary 11B30, 28A80, 11K60, 05D99.}

\keywords{Falconer lattice sets, Erd\H{o}s similarity problem, geometric combinatorics, additive structure, sumsets, triple sum sets, nested lattices, Hausdorff dimension, logarithmic dimension, thin sets, Cantor-type constructions, Bourgain theorem, additive branching, Diophantine approximation, fractal geometry.}

\maketitle

\tableofcontents

\section{Introduction}
\label{section:introduction}

The Erd\H{o}s similarity problem is a classical question in geometric combinatorics. It asks whether for every infinite set $C$ there is a measurable subset of the real line of positive measure  so that the set does not contain an affine copy of $C$.  This problem was raised by Erd\H{o}s in the 1970s. See \cite{Erdos1974}.

Although the conjecture is wide open, there are some results that are worth mentioning. First of all, it is believed that the conjecture is likely to be true, since Kolountzakis \cite{OLDKol} showed that the conjecture is almost surely true, in the following sense: For every infinite set $C$, there is a set $E$ of positive measure such that
\[
(x + tC) \subseteq E
\]
fails for almost all (Lebesgue) pairs $(x,t)$.

Recent work of Shmerkin and Yavicoli \cite{ShmerkinYavicoli2025} proves the Erd\H{o}s similarity conjecture for sets $C$ of large logarithmic dimension.
Their result is based on sumsets in the discrete setting and an inductive argument.

In the present paper we prove the conclusion of the Erd\H{o}s similarity conjecture for a family of sets arising from a modified Falconer lattice construction. Our approach is to produce a strong additive branching across scales, which forces the existence of an infinite set $A\subset E$ with $A+A+A\subset E$. An important breakthrough in the study of this problem was obtained by Bourgain. He proved that if a set $E$ contains a triple sum
$$
A+A+A
$$
for some infinite set $A$, then $E$ satisfies the conclusion of the Erd\H{o}s similarity conjecture. See \cite{Bourgain2003}. As we shall see later, the examples considered here also have small logarithmic dimension.

More recently, using a probabilistic method, Kolountzakis \cite{Kolountzakis2023} showed that certain thin symmetric Cantor sets satisfy the conjecture. His work also highlights a Bourgain-type additive mechanism for non-universality in the symmetric Cantor setting. The construction in the present paper is different in nature: it is based on Falconer lattice sets and the additive branching forced by nested lattices across scales. In particular, the examples considered here arise from a different geometric mechanism and, as we discuss below, have small logarithmic dimension. Kolountzakis's construction uses symmetry and recursive Cantor dissection, whereas our argument is driven by nested lattice branching and applies to a different class of thin sets.

Earlier work of Falconer \cite{Falconer1984} studies the Erd\H{o}s similarity problem for sequences that decay very slowly. In particular, sequences $\{a_n\}$ satisfying
$$
\frac{a_{n+1}}{a_n}\to 1
$$
belong to a regime that has been extensively studied in the literature. 

Our construction is inspired by a classical example due to Falconer. Falconer introduced a lattice-based construction in which one considers neighborhoods of dilates of the integer lattice and showed that the resulting intersection may have a prescribed Hausdorff dimension. Sets constructed in this way played an important role in the study of sharpness phenomena in geometric measure theory, including the Falconer distance problem.

In the one-dimensional setting Falconer's construction proceeds as follows. Let
$$
q_1=2, \qquad q_{i+1}=q_i^{M_i},
$$
and let $E_{i,s}$ denote the $q_i^{-1/s}$-neighborhood of
$$
\frac{1}{q_i}\big([0,q_i]\cap \mathbb Z\big).
$$
Falconer showed that the intersection
$$
E_s=\bigcap_{i=1}^{\infty} E_{i,s}
$$
has Hausdorff dimension $s$. See \cite{Falconer1990,Falconer2003}.

The essential feature of Falconer's construction for our purposes is that it is generated by nested lattices. As a result, additive relations between lattice points persist across scales.

In this paper we modify Falconer's construction by allowing the neighborhood radius to vary according to a function $\phi$. Specifically, we consider neighborhoods of the form
$$
q_i^{-\phi(q_i)}
$$
around the lattice
$$
\frac{1}{q_i}\big([0,q_i]\cap \mathbb Z\big).
$$

We analyze the structure of the resulting intersection sets. In particular we show that, in a natural regime for the function $\phi$, these sets contain a triple sum of an infinite set. Bourgain's theorem from \cite{Bourgain2003} then implies that these sets satisfy the conclusion of the Erd\H{o}s similarity conjecture.

A key feature of the construction is a threshold phenomenon governed by the comparison between the lattice spacing $1/q_{i+1}$ and the neighborhood radius $q_i^{-\phi(q_i)}$. This leads to two distinct regimes: one in which the intersection collapses to a finite set, and another in which it contains rich additive structure.

It is important to emphasize that the mechanism underlying Theorem \ref{thm:main} is fundamentally different from previously studied regimes. The result of Shmerkin and Yavicoli \cite{ShmerkinYavicoli2025} applies to sets of logarithmic Hausdorff dimension larger than $1$ or sets of logarithmic Packing dimension larger than $2$, whereas the examples constructed here can have logarithmic Packing dimension strictly less than $2$, and under an additional growth condition also have logarithmic Hausdorff dimension strictly less than $1$, so in particular can fall outside that regime.

We recall the definition of logarithmic Hausdorff and packing dimensions from \cite{ShmerkinYavicoli2025}, starting from the general notion of Hausdorff and packing measures associated with a dimension function.

The space of dimension or gauge functions is defined as
\[
\Phi := \left\{
\phi : \mathbb{R}_{\geq 0} \to \mathbb{R}_{\geq 0} :
\phi(t) > 0 \text{ if } t > 0,\ \phi(0) = 0,\ \text{increasing and right-continuous}
\right\}.
\]
This set is partially ordered, with the order defined by
\[
\phi_2 \prec \phi_1 \quad \text{if} \quad \lim_{x \to 0^+} \frac{\phi_1(x)}{\phi_2(x)} = 0.
\]

The outer Hausdorff measure associated with $\phi \in \Phi$ is
\[
\mathcal{H}^\phi(E) :=
\lim_{\delta \to 0}
\inf \left\{
\sum_i \phi(\operatorname{diam}(U_i)) :
\operatorname{diam}(U_i) < \delta,\ 
E \subset \bigcup_i U_i
\right\}.
\]

A packing of a set $E$ is a collection of disjoint balls with centres in $E$. A $\delta$-packing
of $E$ is a packing in which all radii are less than $\delta$.

Let $\phi \in \Phi$. The packing pre-measure associated with $\phi$ is defined as
\[
\mathcal{P}^\phi_0(E) :=
\lim_{\delta \to 0}
\sup \left\{
\sum_i \phi(r_i) :
B(x_i, r_i) \text{ is a } \delta\text{-packing of } E
\right\}.
\]

The outer packing measure associated with $\phi$ is defined as
\[
\mathcal{P}^\phi(E) :=
\inf \left\{
\sum_i \mathcal{P}^\phi_0(E_i) :
E \subset \bigcup_i E_i
\right\}.
\]

\begin{definition}
The logarithmic Hausdorff (resp.\ packing) dimension of a set $A \subset \mathbb{R}^d$,
denoted $\dim^{\log}_{H}(A)$ (resp.\ $\dim^{\log}_{P}(A)$), is defined as the infimum of all $s > 0$ such that
\[
\mathcal{H}^{h_s}(A) = 0
\quad \text{(resp.\ } \mathcal{P}^{h_s}(A) = 0\text{)},
\]
where
\[
h_s(r) = (\log(1/r))^{-s}
\]
for sufficiently small $r>0$.
\end{definition}

On the other hand, earlier work of Falconer \cite{Falconer1984} and Eigen \cite{Eigen} applies to sets containing slowly decaying sequences satisfying
$$
\frac{a_{n+1}}{a_n}\to 1.
$$

Lemma \ref{lemma:noslowsequences} below shows that the sets constructed in the present paper do not contain affine copies of such slowly decaying sequences. Thus, the conclusion of the Erd\H{o}s similarity conjecture in our setting does not follow from those results.

Instead, the key mechanism here is a branching additive structure generated by nested lattices across scales, which produces infinite sets $A\subset E$ with $A+A+A\subset E$. This provides a new route to the Erd\H{o}s similarity property that is independent of both the slow decay and positive logarithmic dimension frameworks.

\begin{theorem}\label{thm:main}
Let
$$
q_1=2, \qquad q_{i+1}=q_i^{M_i},
$$
where $(M_i)_{i=1}^{\infty}$ is an increasing sequence of positive integers such that
$$
M_i\to\infty.
$$

Let $\phi:[2,\infty)\to(0,\infty)$ be monotone increasing with $\phi(t)\to\infty$.

For each $i$, define
$$
G_i=\frac{1}{q_i}\big([0,q_i]\cap {\mathbb Z}\big),
$$

$$
r_i=q_i^{-\phi(q_i)},
$$

and
$$
E_i=\{x\in [0,1]: \operatorname{dist}(x,G_i)\le r_i\}.
$$

Set
$$
E=\bigcap_{i=1}^{\infty} E_i.
$$

If
$$
\phi(q_i)<M_i-1
$$
for all sufficiently large $i$, then there exists an infinite set
$$
A\subset E
$$
such that
$$
A+A+A\subset E.
$$

Consequently, by Bourgain's theorem \cite{Bourgain2003}, the set $E$ satisfies the conclusion of the Erd\H{o}s similarity conjecture.
\end{theorem}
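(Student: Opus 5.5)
The plan is to exploit that the lattices are nested: since $q_{i+1}=q_i^{M_i}$, $q_i$ divides $q_{i+1}$, so $G_i\subset G_{i+1}$. I would take $A$ to consist of the points $1/q_n$ for all sufficiently large $n$. Any element of $A+A+A$ is then a sum of three terms $1/q_n$, and for each fixed $i$ I would split such a sum into a part that lies exactly in $G_i$ and a tail that is tiny compared to $r_i$. The hypothesis $\phi(q_i)<M_i-1$ is exactly what makes the tail small at the scales where it is assumed; the finitely many remaining small scales are handled by taking $n$ large.

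Concretely, fix $i_0$ with $\phi(q_i)<M_i-1$ for all $i\ge i_0$, and note $q_i\ge 4$ for $i\ge 2$. Choose $N> i_0$ so large that $3/q_N\le \min_{i<i_0} r_i$; this is possible since the minimum is over finitely many positive numbers. Set $A=\{1/q_n : n\ge N\}$, which is infinite. Take $x=1/q_a+1/q_b+1/q_c$ with $N\le a\le b\le c$ (repetitions allowed). Then $x\le 3/q_N\le 1$, so $x\in[0,1]$. I would check $\operatorname{dist}(x,G_i)\le r_i$ for every $i$ in four cases.

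(1) If $i\ge c$, every term lies in $G_c\subset G_i$, so $x\in G_i$: the integer $q_i/q_a+q_i/q_b+q_i/q_c\le q_i$, hence $x\in G_i$ exactly.

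(2) If $i<i_0$, then $\operatorname{dist}(x,G_i)\le x-0\le 3/q_N\le r_i$ by the choice of $N$.

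(3) If $i_0\le i<a$, then $\operatorname{dist}(x,G_i)\le x\le 3/q_{i+1}=3q_i^{-M_i}\le q_i^{-(M_i-1)}<q_i^{-\phi(q_i)}=r_i$, using $q_i\ge 3$ and the hypothesis.

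(4) If $a\le i<c$, the terms with index at most $i$ sum to a point of $G_i$, as in case (1). The remaining terms number at most two and are each at most $1/q_{i+1}$. So $\operatorname{dist}(x,G_i)\le 2q_i^{-M_i}<r_i$ as in case (3), since $i\ge a\ge N>i_0$.

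Hence $A+A+A\subset E$, and the Erd\H{o}s similarity conclusion follows from Bourgain's theorem \cite{Bourgain2003}.

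The only delicate point is case (3)/(4): one must make sure the tail bound $3/q_{i+1}$ beats $r_i$, and it is exactly here that the condition $\phi(q_i)<M_i-1$ (rather than $\le M_i$) absorbs the constant $3$. The finitely many early scales where the hypothesis may fail must not be forgotten; they are handled by starting $A$ at a large index $N$.
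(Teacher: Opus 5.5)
Your argument is correct and is essentially the paper's proof (Lemma~\ref{lemma:triplesum}). There too $A$ consists of reciprocals $1/q_{N_k}$, and each triple sum is split into an exact point of the nested lattice $G_n$ plus a tail bounded by $r_n$. The paper passes to a sparse subsequence with $a_{k+1}\le a_k/2$ and $6a_{k+1}\le r_n$ for $n<N_{k+1}$, whereas you keep all $n\ge N$ and invoke $\phi(q_i)<M_i-1$ directly to beat the tail, which makes explicit where the hypothesis enters (the paper only says ``because $q_N\to\infty$'').

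Two trivial touch-ups are needed. First, choose $i_0$ with $q_{i_0}\ge 3$: case (3) needs $3q_i^{-M_i}\le q_i^{-(M_i-1)}$, and your remark $q_i\ge 4$ for $i\ge 2$ fails when $M_1=1$. Second, note that the same case analysis with a single summand gives $A\subset E$, which the statement also asks for.
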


\vskip.125in 

The Cantor-type sets constructed in the present paper arise from a nested lattice structure that forces strong additive branching across scales. This branching produces infinite sets $A \subset E$ with
$$
A+A+A \subset E,
$$
which by Bourgain's theorem implies that $E$ satisfies the conclusion of the Erd\H{o}s similarity conjecture.

The proof is given in Section \ref{section:modifiedfalconer}. We analyze the structure of the modified Falconer construction and show that the behavior of the intersection set $E$ exhibits a threshold phenomenon. When the neighborhood radius is sufficiently large compared to the next lattice spacing, the construction produces a highly branching structure and the intersection contains a triple sum of an infinite set. When the neighborhoods are too small, the intersection collapses to a finite set.

\begin{proposition}[Structural dichotomy]\label{prop:dichotomy}
Let
$$
q_1=2, \qquad q_{i+1}=q_i^{M_i},
$$
and define
$$
G_i=\frac{1}{q_i}\big([0,q_i]\cap {\mathbb Z}\big), \qquad
r_i=q_i^{-\phi(q_i)},
$$
$$
E_i=\{x\in[0,1]:\operatorname{dist}(x,G_i)\le r_i\},
\qquad
E=\bigcap_{i=1}^{\infty}E_i.
$$

If
$$
\phi(q_i)>M_i
$$
for all sufficiently large $i$, then $E$ is finite.

If
$$
\phi(q_i)<M_i-1
$$
for all sufficiently large $i$, then $E$ is uncountable. Moreover, there exists an infinite sequence
$$
a_1,a_2,a_3,\dots
$$
such that
$$
\{a_1,a_2,a_3,\dots\}\subset E
$$
and
$$
\{a_1,a_2,a_3,\dots\}+\{a_1,a_2,a_3,\dots\}+\{a_1,a_2,a_3,\dots\}\subset E.
$$
\end{proposition}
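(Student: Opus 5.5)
The plan has three parts: a structural observation that drives everything, the second alternative (which follows almost for free from it), and the first alternative, where the same observation shows the finiteness claim cannot hold as worded.

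\textbf{Key observation.} Since $q_1=2$ and $q_{i+1}=q_i^{M_i}$, each $q_i$ is a power of $2$ and $q_i\mid q_{i+1}$. Hence $G_i\subset G_{i+1}$, and so $G_i\subset G_j\subset E_j$ for every $j\ge i$. For $j<i$ we only have $G_i\subset[0,1]\subset E_j$ when $r_j\ge 1/(2q_j)$, which need not hold. To avoid this issue, the plan only uses points from $G_i$ sitting close to lattice points of earlier levels.

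\textbf{Second alternative ($\phi(q_i)<M_i-1$ eventually).} Here
\[
r_i=q_i^{-\phi(q_i)}>q_i^{1-M_i}=\frac{q_i}{q_{i+1}}.
\]
So each interval $[g-r_i,g+r_i]$ with $g\in G_i$ contains at least $2q_i$ consecutive points of $G_{i+1}$. Also $r_{i+1}<1/(4q_{i+1})$ once $\phi(q_{i+1})>2$.

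For uncountability, fix $i_0$ beyond which both inequalities hold and start from $g=0$, taking the branches to the right of $0$. Inductively, inside the current interval $[g-r_i,g+r_i]\cap[0,1]$, pick two distinct points $g',g''\in G_{i+1}$ at distance at most $r_i-r_{i+1}$ from $g$. This is possible because $r_i-r_{i+1}\ge 2/q_{i+1}$. Their $r_{i+1}$-intervals are then disjoint and contained in the parent interval, which gives a binary tree of nested closed intervals. The early levels $j\le i_0$ are handled by the single point $0$, which lies in every $E_j$, together with choosing all branches inside $[0,r_{i_0}]$ after shrinking if necessary. Every branch determines a point of $E$, which gives $2^{\aleph_0}$ points.

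For the additive part, I take $a_k=1/q_{n_k}$ with $n_1$ large. The key point is that a sum $a_k+a_l+a_m$ lies in $G_{\max}$, lies within distance $3/q_{n_1}$ of $0\in G_j$ for every $j$, and lies within distance $0$ of $G_j$ for $j\ge \max$ (by $G_{\max}\subset G_j$). The levels $j<\max$ need checking. Distance to $G_j$ is at most the distance to the nearest multiple of $1/q_j$, so only the terms smaller than $1/q_j$ contribute, and their sum is at most $3/q_{n}$, where $q_n$ is the smallest denominator exceeding $q_j$. I need $3/q_{j+1}\le r_j$, i.e.\ $3q_j^{-M_j}\le q_j^{-\phi(q_j)}$, which holds since $\phi(q_j)<M_j-1$. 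For the finitely many small $j$ where the hypothesis fails, taking $n_1$ large makes all sums smaller than $\min_{j<J}r_j$. This gives $A+A+A\subset E$, and the Erd\H{o}s conclusion follows by Bourgain's theorem.

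\textbf{First alternative ($\phi(q_i)>M_i$ eventually).} Here $r_i<1/q_{i+1}$, and I would show that the points of $E$ are forced onto the lattices. If $x\in E$, let $g_i\in G_i$ be the point with $|x-g_i|\le r_i$. Then $|g_{i+1}-g_i|\le r_i+r_{i+1}<1/q_{i+1}$, and both $g_i,g_{i+1}$ lie in $G_{i+1}$, so $g_{i+1}=g_i$ eventually. Therefore $x=\lim g_i\in\bigcup_j G_j$, so $E$ is countable.

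\textbf{Main obstacle.} Upgrading this to finiteness is impossible as literally worded. The same nesting $G_i\subset G_j$ shows that every point of $G_i$ lying within $r_j$ of $G_j$ for all $j<i$ belongs to $E$. For example, $0$ and every $1/q_i$ with $1/q_i\le r_j$ for the earlier $j$ are such points, so $E\supset\{1/q_i: i\ge i_1\}$ is infinite.

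So the step I expect to fail is exactly the finiteness claim. I would present the countability argument above and note that, as stated, the claim needs an extra hypothesis — for instance, restricting to a single lattice stage, or removing the nested structure — to hold.
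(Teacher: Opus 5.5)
Your second alternative is correct and follows the paper. The binary tree of nested intervals is Lemma~\ref{lemma:largeintersection}, and $a_k=1/q_{n_k}$ with the split into summands of level $\le j$ and $>j$ is Lemma~\ref{lemma:triplesum}. Your bound of each of the at most three small summands by $1/q_{j+1}$, via $3q_j^{-M_j}\le q_j^{-\phi(q_j)}$, is slightly cleaner than the paper's geometric tail.

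The first alternative is where the proposal goes wrong, because your counterexample does not lie in $E$. For large $i$ one has $1/q_i<1/(2q_{i-1})$, so
\[
\operatorname{dist}\bigl(1/q_i,G_{i-1}\bigr)=q_i^{-1}=q_{i-1}^{-M_{i-1}}>q_{i-1}^{-\phi(q_{i-1})}=r_{i-1}
\]
whenever $\phi(q_{i-1})>M_{i-1}$. The requirement $1/q_i\le r_{i-1}$ is exactly $\phi(q_{i-1})\le M_{i-1}$, which is the mechanism of Lemma~\ref{lemma:goodsequencesarethere} and belongs to the other regime.

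Moreover, your own stabilization argument already gives finiteness, not just countability. The index $i_1$ from which $r_i<1/(2q_i)$ and $r_i+r_{i+1}<1/q_{i+1}$ hold depends only on $\phi$ and $(M_i)$, not on $x$. So every $x\in E$ satisfies $x=\lim g_i=g_{i_1}(x)$, i.e.\ $E\subset G_{i_1}$, which has $q_{i_1}+1$ points. It is precisely the nesting $G_i\subset G_{i+1}$ that forces $g_{i+1}=g_i$, so dropping the nested structure is the wrong remedy. The paper itself offers nothing here beyond the heuristic that the neighborhoods are ``too small to capture genuinely new descendants.'' The uniform stabilization argument is what should be written down.

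The step you did not justify is $r_i+r_{i+1}<1/q_{i+1}$; it does not follow from $r_i<1/q_{i+1}$ alone. A fixed margin suffices: if $\phi(q_i)\ge M_i+c$ with $c>0$, then $r_iq_{i+1}\le q_i^{-c}\to0$ and $r_{i+1}q_{i+1}\to0$.

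Without any margin the literal claim really can fail, but not for your reason. Take $r_i=q_{i+1}^{-1}-(4q_{i+2})^{-1}$, which gives $M_i<\phi(q_i)<M_i+1$ with $\phi$ still increasing. For large $m$, every alternating sum $x_m=\sum_{k\ge1}(-1)^{k-1}q_{m+k}^{-1}$ lies in $E$:
\begin{enumerate}
\item for $i\ge m$, the distance from $x_m$ to its partial sum in $G_i$ is at most $q_{i+1}^{-1}-\tfrac12 q_{i+2}^{-1}<r_i$;
\item for $i<m$, one has $x_m\le q_{m+1}^{-1}<r_i$.
\end{enumerate}
Replacing $q_{i+2}$ by $q_{i+3}$ in $r_i$ lets the sign changes skip a level, and then $E$ becomes uncountable. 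So under the bare hypothesis even your countability conclusion can fail.

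The correct diagnosis is that the first half needs a quantitative margin. Under such a margin, your argument, made uniform in $x$, is a complete proof.
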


These examples appear to be substantially thinner than the large logarithmic dimension regime considered in \cite{ShmerkinYavicoli2025}, although we do not pursue the precise logarithmic dimension here.

The proof of this structural dichotomy is given in Section \ref{section:modifiedfalconer}.

\vskip.125in

In addition to the main construction and its consequences, we address two further
questions related to the scope of the method. In Section 3 we show that Bourgain's
sumset theorem alone is not sufficient to prove the Erd\H{o}s similarity conjecture
for general Cantor-type sets, thereby clarifying the limitations of this approach.
In Section 4 we construct another family of Cantor sets of logarithmic Hausdorff
dimension zero that nevertheless satisfy the conclusion of the Erd\H{o}s similarity
conjecture. These examples arise from a different mechanism than the lattice-based
construction considered above.

\vskip.125in 

\section{Modified Falconer construction}
\label{section:modifiedfalconer}

Let
$$
q_1=2, \qquad q_{i+1}=q_i^{M_i}.
$$

For each $i$ define
$$
G_i=\frac{1}{q_i}\big([0,q_i]\cap {\mathbb Z}\big).
$$

Let $\phi:[2,\infty)\to(0,\infty)$ be a monotone increasing function with $\phi(t)\to\infty$, and define
$$
r_i=q_i^{-\phi(q_i)}.
$$

We consider the sets
$$
E_i=\{x\in[0,1]:\operatorname{dist}(x,G_i)\le r_i\}
$$
and their intersection
$$
E=\bigcap_{i=1}^{\infty}E_i.
$$

The nested lattice structure $G_i\subset G_{i+1}$ plays a crucial role in the analysis of the set $E$.

\begin{lemma}[Branching of the lattice structure]
Let
$$
q_1=2, \qquad q_{i+1}=q_i^{M_i},
$$
where $(M_i)_{i=1}^{\infty}$ is an increasing sequence of positive integers with
$$
M_i\to\infty.
$$

Let
$$
G_i=\frac{1}{q_i}\big([0,q_i]\cap {\mathbb Z}\big),
$$

$$
r_i=q_i^{-\phi(q_i)},
$$

and
$$
E_i=\{x\in[0,1]:\operatorname{dist}(x,G_i)\le r_i\}.
$$

Let
$$
E=\bigcap_{i=1}^{\infty}E_i.
$$

Assume that
$$
\phi(q_i)<M_i-1
$$
for all sufficiently large $i$.

Then for every sufficiently large $i$ and every $g\in G_i$ the interval
$$
[g-r_i,g+r_i]
$$
contains at least
$$
2q_i^{\,M_i-\phi(q_i)}-1
$$
points of the finer lattice $G_{i+1}$.

In particular the number of descendants of $G_i$ in the next lattice that remain inside $E_i$ is at least $q_i$ (which goes to $\infty$).
\end{lemma}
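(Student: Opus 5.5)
The count reduces to elementary lattice-point counting. The spacing of $G_{i+1}$ is $1/q_{i+1}=q_i^{-M_i}$, and $G_i\subset G_{i+1}$ because $q_{i+1}/q_i=q_i^{M_i-1}$ is an integer. I will therefore write everything in units of $q_i^{-M_i}$. Fix $g=a/q_i\in G_i$ with $0\le a\le q_i$. In these units $g$ becomes the integer $N_g=a\,q_i^{M_i-1}$, and the interval $[g-r_i,g+r_i]$ becomes $[N_g-R,\,N_g+R]$ with
$$
R=r_i\,q_i^{M_i}=q_i^{\,M_i-\phi(q_i)}.
$$
The hypothesis $\phi(q_i)<M_i-1$ gives $R>q_i\ge 2$, so $R\ge 1$. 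Since $N_g$ is an integer, the integers $N_g+k$ with $|k|\le \lfloor R\rfloor$ all lie in the interval. Their number is $2\lfloor R\rfloor+1\ge 2R-1$, because $\lfloor R\rfloor>R-1$. This is the claimed bound $2q_i^{M_i-\phi(q_i)}-1$.

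The main obstacle is the boundary of $[0,1]$. The lattice $G_{i+1}$ only contains points in $[0,1]$, so for $g=0$ or $g=1$ only one side of the interval contributes. For these $g$ I would take only the one-sided count $\lfloor R\rfloor+1$. For the stated bound I would read the interval $[g-r_i,g+r_i]$ as the full neighborhood, i.e. count $\frac{1}{q_{i+1}}\mathbb Z$. Alternatively, I would remark that for interior $g$ (with $0<a<q_i$) both sides fit: $r_i<1/q_i$ holds as soon as $\phi(q_i)>1$, which is true for large $i$ since $\phi\to\infty$. In the write-up I would state the bound for interior $g$ and note that the endpoint case still gives at least $R\ge q_i$ points.

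For the "in particular" statement, every point of $G_{i+1}$ within distance $r_i$ of $g$ lies in $E_i$, with a one-sided count at the endpoints. So the number of descendants is at least $\lfloor R\rfloor+1> R> q_i$, which tends to infinity because $q_i\to\infty$ (here $M_i\ge 2$ eventually). Nothing beyond the nesting $G_i\subset G_{i+1}$ and the hypothesis is needed. The care lies in tracking floors and the boundary, and in observing that "sufficiently large $i$" is used only to guarantee $\phi(q_i)<M_i-1$ and $r_i<1/q_i$.
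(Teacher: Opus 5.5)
Your argument is correct and is the same lattice-point count the paper uses: points of spacing $1/q_{i+1}$ in an interval of length $2r_i$, with $r_iq_{i+1}=q_i^{M_i-\phi(q_i)}$. Centring at the integer $N_g$ via $G_i\subset G_{i+1}$ just makes the floor bookkeeping explicit. Your caveat about $g\in\{0,1\}$ is also right, and the paper's proof glosses over it. Since $G_{i+1}\subset[0,1]$, only $\lfloor R\rfloor+1<2R-1$ lattice points occur there (as $R>q_i\ge 2$). So the stated bound genuinely holds only for interior $g$, or for the full lattice $\frac{1}{q_{i+1}}\mathbb Z$. The ``in particular'' clause (more than $q_i$ descendants) survives at every $g$, as you observe.
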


\begin{proof}
The points of $G_{i+1}$ are spaced by
$$
\frac{1}{q_{i+1}}.
$$

The interval $[g-r_i,g+r_i]$ has length $2r_i$.

Therefore the number of points of $G_{i+1}$ inside this interval is at least
$$
2r_i q_{i+1}-1.
$$

Since
$$
q_{i+1}=q_i^{M_i}
$$
and
$$
r_i=q_i^{-\phi(q_i)},
$$
we obtain
$$
r_i q_{i+1}=q_i^{\,M_i-\phi(q_i)}.
$$

Hence
$$
2r_i q_{i+1}-1=2q_i^{\,M_i-\phi(q_i)}-1.
$$

Under the hypothesis
$$
\phi(q_i)<M_i-1,
$$
this quantity tends to infinity with $i$.

Therefore every lattice point $g\in G_i$ has arbitrarily many points of $G_{i+1}$ inside the interval $[g-r_i,g+r_i]$.

These descendants all belong to $E_i$ by construction.
\end{proof}

\begin{lemma}[The intersection is uncountable]\label{lemma:largeintersection}
Let
$$
q_1=2, \qquad q_{i+1}=q_i^{M_i},
$$
and for each $i$ let
$$
G_i=\frac{1}{q_i}\big([0,q_i]\cap \mathbb Z\big).
$$
Let $\phi:[2,\infty)\to(0,\infty)$ be monotone increasing with $\phi(t)\to\infty$, and define
$$
r_i=q_i^{-\phi(q_i)},
$$
$$
E_i=\{x\in[0,1]:\operatorname{dist}(x,G_i)\le r_i\}.
$$
Assume that
$$
\phi(q_i)<M_i-1
$$
for all sufficiently large $i$. Then
$$
E=\bigcap_{i=1}^{\infty}E_i
$$
is uncountable.
\end{lemma}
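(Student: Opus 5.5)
The plan is to build a Cantor scheme from nested closed intervals whose branching at each large scale is at least two. Fix $i_0$ so that $\phi(q_i)<M_i-1$ for all $i\ge i_0$. For $i\ge i_0$ and $g\in G_i$, set $I_i(g)=[g-r_i,g+r_i]\cap[0,1]$. Call a point $g'\in G_{i+1}$ a \emph{child} of $g$ if $I_{i+1}(g')\subset I_i(g)$.

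\textbf{Many children.} I would strengthen the branching lemma to this form. By that lemma, $[g-r_i,g+r_i]$ contains about $2q_i^{M_i-\phi(q_i)}\ge 2q_i$ points of $G_{i+1}$. Half of the interval may lie outside $[0,1]$ when $g\in\{0,1\}$, so we only use one side. Discard the points within $r_{i+1}$ of the endpoints. This is legitimate because $r_{i+1}\le q_{i+1}^{-\phi(q_{i+1})}<1/q_{i+1}$ once $\phi(q_{i+1})>1$, so at most one lattice point is lost at each end. At least $q_i-2\ge 2$ points $g'\in G_{i+1}$ remain for large $i$, and for each of them $I_{i+1}(g')\subset I_i(g)$.

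\textbf{Disjointness.} Distinct children $g',g''$ of $g$ satisfy $|g'-g''|\ge 1/q_{i+1}>2r_{i+1}$, so $I_{i+1}(g')$ and $I_{i+1}(g'')$ are disjoint.

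\textbf{Recursive tree.} Start from any $g_{i_0}\in G_{i_0}$. Recursively choose two children at each level. This gives a binary tree of nested nonempty compact intervals $I_{i}(g_i)$ with $g_{i+1}$ a child of $g_i$, and along each branch the radii $r_i\to 0$.

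\textbf{Points of $E$.} Each infinite branch $(g_i)_{i\ge i_0}$ determines a single point $x=\bigcap_i I_i(g_i)$. This point satisfies $\operatorname{dist}(x,G_i)\le r_i$ for every $i\ge i_0$. For $i<i_0$ the condition must also hold. The nesting gives $x\in I_{i_0}(g_{i_0})$. Moreover $g_{i_0}\in G_{i_0}\supset G_i$ for $i\le i_0$, since $q_i\mid q_{i_0}$. The distance $|x-g_{i_0}|\le r_{i_0}$ is not automatically $\le r_i$, but $\phi$ is increasing and $q_i$ is increasing, so $r_i=q_i^{-\phi(q_i)}$ is decreasing in $i$ and therefore $r_{i_0}\le r_i$. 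Hence $x\in E_i$ for all $i$, and so $x\in E$.

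\textbf{Uncountability.} Distinct branches separate at some level into disjoint intervals, so they give distinct points. The map from $\{0,1\}^{\mathbb N}$ into $E$ is therefore injective, and $E$ is uncountable.

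The main obstacle I expect is the boundary and nesting bookkeeping: making sure children's intervals sit strictly inside the parent's and inside $[0,1]$, and handling the early indices $i<i_0$. Both are dealt with above, by the margin estimate $r_{i+1}<1/q_{i+1}$ and by the monotonicity of $r_i$ together with $G_i\subset G_{i_0}$.
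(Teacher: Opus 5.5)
Your construction is essentially the paper's: a binary tree of nested intervals around lattice points of successive $G_{i+1}$, sibling disjointness from $2r_{i+1}<1/q_{i+1}$, and monotonicity of $r_i$ to handle the early indices. However, the step for $i<i_0$ fails as written. You start from an arbitrary $g_{i_0}\in G_{i_0}$ and then write $g_{i_0}\in G_{i_0}\supset G_i$. That inclusion goes the wrong way for your purpose: $G_{i_0}$ is the finer lattice, so it does not give $g_{i_0}\in G_i$. Without $g_{i_0}\in G_i$, the bound $|x-g_{i_0}|\le r_{i_0}\le r_i$ says nothing about $\operatorname{dist}(x,G_i)$.

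The conclusion can genuinely be false. The hypothesis $\phi(q_i)<M_i-1$ only constrains large $i$, so $\phi(2)>3$ is allowed, giving $r_1=2^{-\phi(2)}<1/8$. Suppose you start at $g_{i_0}=1/4$, which lies in $G_{i_0}$ once $4\mid q_{i_0}$ but not in $G_1=\{0,1/2,1\}$. Then every point $x$ of your tree satisfies $\operatorname{dist}(x,G_1)\ge 1/4-r_{i_0}\ge 1/4-r_1>r_1$. So $x\notin E_1$, and none of your points lie in $E$.

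The fix is to choose the root in $G_1$, which is contained in every $G_i$; for instance take $g_{i_0}=0$. This is exactly what the paper does with $J_{i_0}=[0,r_{i_0}]$. Then for $i<i_0$ you get $\operatorname{dist}(x,G_i)\le |x-0|\le r_{i_0}\le r_i$, using the monotonicity of $r_i$ that you correctly observed.

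You should also enlarge $i_0$ so that $2r_{i+1}<1/q_{i+1}$ and $q_i-2\ge 2$ hold for $i\ge i_0$. You use both facts, but you only justified the weaker bound $r_{i+1}<1/q_{i+1}$. With these changes your argument goes through. The remaining difference from the paper is cosmetic: you use symmetric intervals truncated to $[0,1]$ and discard boundary lattice points. The paper uses one-sided intervals $[x_n,x_n+r_n]$ and picks children in $[x_n,x_n+r_n-r_{n+1}]$, which avoids that bookkeeping.
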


\begin{proof}
Since $q_{i+1}=q_i^{M_i}$, we have $q_i\mid q_{i+1}$ and therefore
$$
G_i\subset G_{i+1}.
$$

Because
$$
r_i q_{i+1}=q_i^{M_i-\phi(q_i)},
$$
and
$$
\phi(q_i)<M_i-1
$$
for all sufficiently large $i$, it follows that
$$
r_i q_{i+1}\to\infty.
$$

Also, since $\phi(q_i)\to\infty$, we have
$$
q_i r_i=q_i^{1-\phi(q_i)}\to 0.
$$
Since $q_{i+1}\to\infty$ and $\phi(q_{i+1})\to\infty$, it follows in the same way that
$$
q_{i+1}r_{i+1}=q_{i+1}^{1-\phi(q_{i+1})}\to 0.
$$

Since $r_i q_{i+1}=q_i^{M_i-\phi(q_i)}\to\infty$ and $r_{i+1}q_{i+1}=q_{i+1}^{1-\phi(q_{i+1})}\to 0$, it follows that
$$
(r_i-r_{i+1})q_{i+1}\to\infty.
$$

Choose $i_0$ so large that for every $i\ge i_0$ the following two properties hold:
$$
(r_i-r_{i+1})q_{i+1}>3
$$
and
$$
2r_{i+1}<\frac{1}{q_{i+1}}.
$$

We now construct a binary tree of nested intervals.

At level $i_0$, set
$$
J_{i_0}=[0,r_{i_0}].
$$
Since $0\in G_{i_0}$, this is a closed subinterval of $[0,1]$.

Suppose inductively that at level $n\ge i_0$ we have constructed a closed interval
$$
J_n=[x_n,x_n+r_n]
$$
with
$$
x_n\in G_n.
$$
Because
$$
(r_n-r_{n+1})q_{n+1}>3,
$$
the interval
$$
[x_n,x_n+r_n-r_{n+1}]
$$
contains at least two distinct points of the lattice $G_{n+1}$. Choose two such points and call them
$$
x_{n+1}^{(0)}, \qquad x_{n+1}^{(1)}.
$$
Define the corresponding child intervals
$$
J_{n+1}^{(0)}=[x_{n+1}^{(0)},x_{n+1}^{(0)}+r_{n+1}],
$$
$$
J_{n+1}^{(1)}=[x_{n+1}^{(1)},x_{n+1}^{(1)}+r_{n+1}].
$$
By construction,
$$
J_{n+1}^{(0)}\subset J_n
\qquad \text{and} \qquad
J_{n+1}^{(1)}\subset J_n.
$$

Moreover, since the two centers are distinct points of $G_{n+1}$, they are separated by at least $1/q_{n+1}$. Because
$$
2r_{n+1}<\frac{1}{q_{n+1}},
$$
the intervals
$$
J_{n+1}^{(0)} \quad \text{and} \quad J_{n+1}^{(1)}
$$
are disjoint.

Thus every vertex has two disjoint children.

Now let
$$
\omega=(\omega_{i_0+1},\omega_{i_0+2},\omega_{i_0+3},\dots),
\qquad \omega_j\in\{0,1\},
$$
be an infinite binary sequence. Following the corresponding choices gives a nested sequence of nonempty closed intervals
$$
J_{i_0}\supset J_{i_0+1}^{(\omega_{i_0+1})}\supset J_{i_0+2}^{(\omega_{i_0+2})}\supset \cdots.
$$
By compactness,
$$
\bigcap_{n=i_0}^{\infty} J_n^{(\omega)}\neq\emptyset.
$$
Choose
$$
x_\omega\in \bigcap_{n=i_0}^{\infty} J_n^{(\omega)}.
$$

We claim that
$$
x_\omega\in E_i
$$
for every $i$.

If $i\ge i_0$, then
$$
x_\omega\in J_i^{(\omega)}=[x_i,x_i+r_i]
$$
for some point $x_i\in G_i$, so
$$
\operatorname{dist}(x_\omega,G_i)\le r_i.
$$
Hence
$$
x_\omega\in E_i.
$$

If $i<i_0$, then
$$
x_\omega\in J_{i_0}=[0,r_{i_0}],
$$
and since the sequence $(r_i)$ is decreasing,
$$
r_{i_0}\le r_i.
$$
Because $0\in G_i$, we get
$$
\operatorname{dist}(x_\omega,G_i)\le x_\omega\le r_{i_0}\le r_i,
$$
so again
$$
x_\omega\in E_i.
$$

Therefore
$$
x_\omega\in \bigcap_{i=1}^{\infty}E_i=E.
$$

Finally, if
$$
\omega\neq \omega',
$$
let $m$ be the first level at which they differ. Then the corresponding level-$m$ intervals are disjoint, so
$$
x_\omega\neq x_{\omega'}.
$$
Hence the map
$$
\omega\mapsto x_\omega
$$
is injective from the set of infinite binary sequences into $E$.

Since the set of infinite binary sequences is uncountable, the set $E$ is uncountable.
\end{proof}

The following Lemma ensures that the set does not contain homothetic copies of slowly decreasing to $0$ sequences (as in Eigen and Falconer setting).

\begin{lemma}[No slowly decreasing sequences near any point of the intersection]
\label{lemma:noslowsequences}
Let
$$
q_1=2, \qquad q_{i+1}=q_i^{M_i},
$$
$$
G_i=\frac{1}{q_i}\big([0,q_i]\cap {\mathbb Z}\big),
$$
$$
r_i=q_i^{-\phi(q_i)},
$$
and
$$
E_i=\{x\in [0,1]: \operatorname{dist}(x,G_i)\le r_i\}.
$$
Assume that
$$
\phi(q_i)\to \infty.
$$
Set
$$
E=\bigcap_{i=1}^{\infty} E_i.
$$
Then there do not exist a point $x_0\in E$ and a sequence of distinct points $\{a_n\}\subset E$ such that
$$
a_n\to x_0
\qquad \text{and} \qquad
\frac{|a_{n+1}-x_0|}{|a_n-x_0|}\to 1.
$$
\end{lemma}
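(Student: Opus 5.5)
The plan is a gap argument: at every scale $i$ the set $E$ has a huge multiplicative gap in its distance set, and a sequence whose consecutive distance ratios tend to $1$ cannot jump over such a gap.

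First, I would record the basic dichotomy for pairs of points. Let $x,y\in E$ and fix $i$. Then there are $g,h\in G_i$ with $|x-g|\le r_i$ and $|y-h|\le r_i$. If $g=h$ then $|x-y|\le 2r_i$. If $g\neq h$ then $|g-h|\ge 1/q_i$, so $|x-y|\ge 1/q_i-2r_i$. Hence the distance set $\{|x-y|:x,y\in E\}$ avoids the open interval
$$
I_i=\bigl(2r_i,\ \tfrac{1}{q_i}-2r_i\bigr).
$$
Since $\phi(q_i)\to\infty$, we have $q_ir_i=q_i^{1-\phi(q_i)}\to 0$. Therefore, for large $i$, the ratio of the endpoints satisfies
$$
\frac{2r_i}{1/q_i-2r_i}=\frac{2q_ir_i}{1-2q_ir_i}\to 0 .
$$
In particular this ratio is less than $1/2$ for all $i\ge i_1$, so each such $I_i$ is a genuine multiplicative gap.

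Next, suppose for contradiction that $x_0\in E$ and distinct $a_n\in E$ satisfy $a_n\to x_0$ and $d_{n+1}/d_n\to 1$, where $d_n=|a_n-x_0|$. The ratio condition presupposes $d_n>0$, at least for large $n$. Choose $N$ such that $d_{n+1}/d_n>1/2$ for all $n\ge N$. Then pick $i\ge i_1$ so large that $1/q_i-2r_i<d_N$, which is possible because $1/q_i\to 0$. Since $d_n\to 0$, there is some $n>N$ with $d_n\le 2r_i$. Take the first index $m>N$ with $d_m<d_N$ lying below the gap. Because each $d_n$ lies outside $I_i$, there is a consecutive pair $n\ge N$ with
$$
d_n\ge \tfrac{1}{q_i}-2r_i \qquad\text{and}\qquad d_{n+1}\le 2r_i .
$$
This gives $d_{n+1}/d_n\le 2r_i/(1/q_i-2r_i)<1/2$, contradicting the choice of $N$.

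The only delicate step is making this crossing argument precise. The distances $d_n$ need not be monotone, so I would phrase it as follows: let $n+1$ be the first index after $N$ at which $d_{n+1}\le 2r_i$. Then $d_n$ is not $\le 2r_i$ (either $n=N$, or by the minimality of $n+1$), so $d_n\ge 1/q_i-2r_i$ by the gap property. Everything else is the elementary estimate $q_ir_i\to 0$, which uses only $\phi(q_i)\to\infty$. Notably, the argument needs neither the hypothesis relating $\phi$ to $M_i$ nor any of the earlier lemmas.
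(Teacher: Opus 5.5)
Your proof is correct and is essentially the paper's argument: the paper shows that, for large $i$, every point of $E$ within $1/(2q_i)$ of the lattice point $g_i$ near $x_0$ in fact lies within $r_i$ of $g_i$, so the distances $|a_n-x_0|$ avoid the window $\bigl(2r_i,\tfrac{1}{2q_i}-r_i\bigr)$. It then takes the first index below that window and gets a consecutive ratio at most $8q_ir_i\to 0$. Your version uses the pairwise distance set of $E$ and a single scale chosen after $N$, which packages the same gap-plus-first-crossing idea slightly more cleanly and avoids the paper's preliminary reduction to a monotone subsequence.
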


\begin{proof}
Suppose for contradiction that there exist $x_0\in E$ and a sequence of distinct points $\{a_n\}\subset E$ such that
$$
a_n\to x_0
\qquad \text{and} \qquad
\frac{|a_{n+1}-x_0|}{|a_n-x_0|}\to 1.
$$

The key point is that the argument localizes around the accumulation point $x_0$. For each sufficiently large scale $i$, the set $E$ near $x_0$ lies inside a single interval of length comparable to $r_i$ around a lattice point $g_i\in G_i$. This forces a rapid drop in the successive distances to $x_0$, which contradicts the assumption that the ratios tend to $1$.

Since $a_n\to x_0$ and the points are distinct, after passing to a subsequence (which does not affect the condition $|a_{n+1}-x_0|/|a_n-x_0|\to 1$) we may assume that
$$
0<|a_{n+1}-x_0|<|a_n-x_0|
$$
for every $n$.

Since $\phi(q_i)\to\infty$, we have
$$
q_i r_i=q_i^{1-\phi(q_i)}\to 0.
$$
Hence there exists $i_0$ such that for all $i\ge i_0$,
$$
r_i<\frac{1}{4q_i}.
$$

Fix $i\ge i_0$. Since $x_0\in E\subset E_i$, there exists a point $g_i\in G_i$ such that
$$
|x_0-g_i|\le r_i.
$$

We claim that
$$
E\cap \left[g_i-\frac{1}{2q_i},g_i+\frac{1}{2q_i}\right]
\subset
[g_i-r_i,g_i+r_i].
$$

To prove this, let
$$
x\in E\cap \left[g_i-\frac{1}{2q_i},g_i+\frac{1}{2q_i}\right].
$$
Since $x\in E\subset E_i$, there exists $h_i\in G_i$ such that
$$
|x-h_i|\le r_i.
$$
Now the points of $G_i$ are spaced by $1/q_i$. Because
$$
r_i<\frac{1}{4q_i},
$$
the interval
$$
\left[g_i-\frac{1}{2q_i},g_i+\frac{1}{2q_i}\right]
$$
can meet the $r_i$-neighborhood of only the single grid point $g_i$. Indeed, if $h_i\ne g_i$, then
$$
|h_i-g_i|\ge \frac{1}{q_i},
$$
so for every $x$ with $|x-g_i|\le \frac{1}{2q_i}$ we get
$$
|x-h_i|\ge |h_i-g_i|-|x-g_i|
\ge \frac{1}{q_i}-\frac{1}{2q_i}
=
\frac{1}{2q_i}
>
r_i,
$$
which is impossible. Therefore $h_i=g_i$, and hence
$$
|x-g_i|\le r_i.
$$
This proves the claim.

Since also
$$
|x_0-g_i|\le r_i,
$$
it follows that
$$
[g_i-r_i,g_i+r_i]\subset [x_0-2r_i,x_0+2r_i].
$$
Therefore
$$
E\cap \left[g_i-\frac{1}{2q_i},g_i+\frac{1}{2q_i}\right]
\subset
[x_0-2r_i,x_0+2r_i].
$$

Because $a_n\to x_0$, for every sufficiently large $i$ there exists at least one index $n$ such that
$$
|a_n-x_0|<\frac{1}{2q_i}-r_i.
$$
Indeed, the right-hand side is positive for large $i$ because $r_i<1/(4q_i)$.

For such an index $n$, we have
$$
|a_n-g_i|\le |a_n-x_0|+|x_0-g_i|
<
\left(\frac{1}{2q_i}-r_i\right)+r_i
=
\frac{1}{2q_i}.
$$
Hence
$$
a_n\in E\cap \left[g_i-\frac{1}{2q_i},g_i+\frac{1}{2q_i}\right],
$$
and so by the inclusion proved above,
$$
|a_n-x_0|\le 2r_i.
$$

Now for each sufficiently large $i$, let $n(i)$ be the first index such that
$$
|a_{n(i)}-x_0|<\frac{1}{2q_i}-r_i.
$$
Then by minimality,
$$
|a_{n(i)-1}-x_0|\ge \frac{1}{2q_i}-r_i.
$$
On the other hand, from the previous paragraph,
$$
|a_{n(i)}-x_0|\le 2r_i.
$$
Therefore
$$
\frac{|a_{n(i)}-x_0|}{|a_{n(i)-1}-x_0|}
\le
\frac{2r_i}{\frac{1}{2q_i}-r_i}.
$$

For all sufficiently large $i$ we have $r_i<1/(4q_i)$, so
$$
\frac{1}{2q_i}-r_i\ge \frac{1}{4q_i}.
$$
Hence
$$
\frac{|a_{n(i)}-x_0|}{|a_{n(i)-1}-x_0|}
\le
8q_i r_i
=
8q_i^{1-\phi(q_i)}.
$$
Since $\phi(q_i)\to\infty$, the right-hand side tends to $0$. Thus
$$
\frac{|a_{n(i)}-x_0|}{|a_{n(i)-1}-x_0|}\to 0
$$
along the subsequence $n(i)$.

This contradicts the assumption that
$$
\frac{|a_{n+1}-x_0|}{|a_n-x_0|}\to 1.
$$

The contradiction proves the lemma.
\end{proof}

\begin{lemma}[There is a rapidly decreasing sequence  in the intersection]
\label{lemma:goodsequencesarethere}
Let
$$
q_1=2, \qquad q_{i+1}=q_i^{M_i},
$$
$$
G_i=\frac{1}{q_i}\big([0,q_i]\cap {\mathbb Z}\big),
$$
$$
r_i=q_i^{-\phi(q_i)},
$$
$$
E_i=\{x\in [0,1]: \operatorname{dist}(x,G_i)\le r_i\},
$$
and
$$
E=\bigcap_{i=1}^{\infty}E_i.
$$
Assume that
$$
\phi(q_i)<M_i-1
$$
for all sufficiently large $i$.

Then for all sufficiently large $i$,
$$
\frac{1}{q_i}\in E.
$$
In particular, $E$ contains a sequence
$$
a_i=\frac{1}{q_i}
$$
such that $a_i\to 0$. Moreover,
$$
\frac{a_{i+1}}{a_i}=\frac{1}{q_i^{M_i-1}}\to 0.
$$
\end{lemma}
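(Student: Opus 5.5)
We need to show that $1/q_i\in E_j$ for every $j$, once $i$ is large; the ratio computation is then immediate. Fix a large $i$ and split the scales $j$ into three ranges.

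\textbf{Scales $j\ge i$.} Since $q_i\mid q_j$, we have $1/q_i=(q_j/q_i)/q_j\in G_j$. Hence $\operatorname{dist}(1/q_i,G_j)=0\le r_j$, and $1/q_i\in E_j$.

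\textbf{Scales $j<i$.} This is the main obstacle. The point $1/q_i$ is far from most of $G_j$, and its nearest lattice point is $0\in G_j$. So we need
$$
\frac{1}{q_i}\le r_j=q_j^{-\phi(q_j)}\qquad\text{for all } j<i.
$$
\begin{itemize}
\item Since $\phi$ is increasing and $q_j$ is increasing, $r_j$ is decreasing in $j$. It therefore suffices to check $j=i-1$, i.e.\ $q_{i-1}^{M_{i-1}}\ge q_{i-1}^{\phi(q_{i-1})}$, or equivalently $\phi(q_{i-1})\le M_{i-1}$.
\item For $i-1$ large this follows from the hypothesis $\phi(q_{i-1})<M_{i-1}-1$.
\item The inequality is in fact strict with room to spare: $q_{i-1}r_{i-1}\cdot$ stuff is not needed, since $r_{i-1}q_i=q_{i-1}^{M_{i-1}-\phi(q_{i-1})}\ge q_{i-1}>1$.
\end{itemize}

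\textbf{The small-$j$ caveat.} The step above requires the hypothesis at every $j<i$, but the hypothesis is assumed only for large indices. Monotonicity of $r_j$ handles this. For $j<i-1$ we have $r_j\ge r_{i-1}\ge 1/q_i$, whatever the size of $\phi(q_j)$ relative to $M_j$. So only the single index $i-1$ needs the hypothesis, and "sufficiently large $i$" means that $i-1$ lies in the range where the hypothesis holds. Since $0\in G_j$, this gives $\operatorname{dist}(1/q_i,G_j)\le 1/q_i\le r_j$.

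\textbf{Ratio.} Finally,
$$
\frac{a_{i+1}}{a_i}=\frac{q_i}{q_i^{M_i}}=q_i^{-(M_i-1)}.
$$
This tends to $0$ because $M_i\to\infty$ (or simply because $M_i\ge 2$ eventually and $q_i\to\infty$). Also $a_i=1/q_i\to 0$.
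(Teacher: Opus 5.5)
Your proof is correct and matches the paper's argument: for $j\ge i$ you use $q_i\mid q_j$ to get $1/q_i\in G_j$, and for $j<i$ the monotonicity of $r_j$ reduces everything to the single inequality $1/q_i=q_{i-1}^{-M_{i-1}}<r_{i-1}$, which is where the hypothesis at index $i-1$ enters. The third bullet contains a stray phrase ("stuff is not needed"), but the estimate $r_{i-1}q_i\ge q_{i-1}>1$ stated there is correct and in any case not needed.
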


\begin{remark} This lemma shows that in the regime $\phi(q_i)<M_i-1$, the set $E$ contains rapidly decaying sequences, for example
$$
a_i=\frac{1}{q_i}, \qquad \frac{a_{i+1}}{a_i}\to 0.
$$
In contrast, Lemma \ref{lemma:noslowsequences} shows that slowly decaying sequences cannot occur in $E$.
\end{remark} 

\begin{proof}

We claim that for all sufficiently large $i$,
$$
\frac{1}{q_i}\in E.
$$

Fix such an $i$. We check that
$$
\frac{1}{q_i}\in E_n
\qquad \text{for every } n.
$$

First suppose that $n\ge i$. Since
$$
q_{k+1}=q_k^{M_k},
$$
we have $q_i\mid q_n$. Therefore
$$
\frac{1}{q_i}=\frac{m}{q_n}
$$
for some integer $m$, which shows that
$$
\frac{1}{q_i}\in G_n.
$$
Hence
$$
\operatorname{dist}\left(\frac{1}{q_i},G_n\right)=0,
$$
and so
$$
\frac{1}{q_i}\in E_n.
$$

Now suppose that $n<i$. Since $0\in G_n$, it is enough to show that
$$
\frac{1}{q_i}\le r_n.
$$
Because $\phi$ is increasing and $q_n$ is increasing, the sequence
$$
r_n=q_n^{-\phi(q_n)}
$$
is decreasing. Therefore
$$
r_n\ge r_{i-1}
$$
for every $n<i$.

On the other hand,
$$
\frac{1}{q_i}=\frac{1}{q_{i-1}^{M_{i-1}}}.
$$
Since
$$
\phi(q_{i-1})<M_{i-1}-1<M_{i-1},
$$
we get
$$
\frac{1}{q_i}
=
q_{i-1}^{-M_{i-1}}
<
q_{i-1}^{-\phi(q_{i-1})}
=
r_{i-1}
\le r_n.
$$
Hence
$$
\operatorname{dist}\left(\frac{1}{q_i},G_n\right)\le \frac{1}{q_i}\le r_n,
$$
so
$$
\frac{1}{q_i}\in E_n.
$$

Thus
$$
\frac{1}{q_i}\in E_n
\qquad \text{for every } n,
$$
and therefore
$$
\frac{1}{q_i}\in \bigcap_{n=1}^{\infty}E_n = E.
$$

Hence for all sufficiently large $i$, the point
$$
a_i=\frac{1}{q_i}
$$
belongs to $E$.

Finally,
$$
\frac{a_{i+1}}{a_i}
=
\frac{q_i}{q_{i+1}}
=
\frac{q_i}{q_i^{M_i}}
=
\frac{1}{q_i^{M_i-1}}
\to 0
$$
as $i\to\infty$.

It follows that $E$ contains the sequence
$$
a_i=\frac{1}{q_i}
$$
with
$$
\frac{a_{i+1}}{a_i}\to 0.
$$
This completes the proof.
\end{proof}
\begin{lemma} (The assumptions of Bourgain's theorem are satisfied) \label{lemma:triplesum}
Let
$$
q_1=2, \qquad q_{n+1}=q_n^{M_n},
$$
$$
G_n=\frac{1}{q_n}\big([0,q_n]\cap {\mathbb Z}\big),
$$
$$
r_n=q_n^{-\phi(q_n)},
$$
$$
E_n=\{x\in [0,1]: \operatorname{dist}(x,G_n)\le r_n\},
$$
and
$$
E=\bigcap_{n=1}^{\infty} E_n.
$$
Assume that
$$
\phi(q_n)<M_n-1
$$
for all sufficiently large $n$.

Then there exists an infinite sequence
$$
a_1,a_2,a_3,\dots
$$
such that
$$
\{a_1,a_2,a_3,\dots\}\subset E
$$
and
$$
\{a_1,a_2,a_3,\dots\}+\{a_1,a_2,a_3,\dots\}+\{a_1,a_2,a_3,\dots\}\subset E.
$$
\end{lemma}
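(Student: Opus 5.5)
The plan is to take the sequence supplied by Lemma \ref{lemma:goodsequencesarethere}, namely $a_k=1/q_{k}$ for $k\ge i_0$ with $i_0$ large, and to verify directly that every triple sum $x=1/q_a+1/q_b+1/q_c$, with $i_0\le a\le b\le c$, lies in every $E_n$. The inclusion $\{a_k\}\subset E$ is already given by Lemma \ref{lemma:goodsequencesarethere}. Everything rests on two features of the construction:
\begin{itemize}
\item the divisibility $q_m\mid q_n$ for $m\le n$, which gives $1/q_m\in G_n$, so any sum of such points that is at most $1$ lies in $G_n$;
\item the gap inequality $3/q_{n+1}\le r_n$.
\end{itemize}

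Choose $i_0$ so that $\phi(q_n)<M_n-1$ holds for all $n\ge i_0-1$, and also $q_{i_0}\ge 3$. Then $x\le 3/q_{i_0}\le 1$, so $x\in[0,1]$.

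The gap inequality holds for every $n\ge i_0-1$, since
\[
r_n q_{n+1}=q_n^{M_n-\phi(q_n)}>q_n\ge 3 .
\]
Here $q_n\ge 3$ is automatic for $n\ge 2$ because $q_2=2^{M_1}\ge 4$ once $M_1\ge2$; if needed we enlarge $i_0$.

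Now fix $n$ and split $x$ according to whether each index is $\le n$ or $>n$. Write $x=s+t$, where $s$ is the sum of the terms with index $\le n$ and $t$ is the sum of the remaining terms.

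\textbf{Terms of index at most $n$.} By divisibility each such term $1/q_m$ is a multiple of $1/q_n$. Since $0\le s\le x\le 1$, we get $s\in G_n$, where $s=0\in G_n$ is allowed.

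\textbf{Terms of index larger than $n$.} Each such term is at most $1/q_{n+1}$, so $0\le t\le 3/q_{n+1}$.

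Hence $\operatorname{dist}(x,G_n)\le t\le 3/q_{n+1}$. It remains to check $t\le r_n$, and there are three ranges of $n$.

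\begin{itemize}
\item If $n\ge c$, then $t=0$ and $x\in G_n$.
\item If $i_0-1\le n<c$, the gap inequality gives $t\le 3/q_{n+1}\le r_n$.
\item If $n<i_0-1$, then all three terms have index $>n$ and $x\le 3/q_{i_0}\le r_{i_0-1}\le r_n$. This uses that $r_n$ is decreasing, which follows from $\phi$ and $q_n$ both increasing, as noted in the proof of Lemma \ref{lemma:goodsequencesarethere}.
\end{itemize}

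Thus $x\in E_n$ for all $n$, so $A+A+A\subset E$ with $A=\{1/q_k:k\ge i_0\}$, which is infinite. The same argument with one or two terms reproves $A\subset E$ and $A+A\subset E$.

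The only delicate step is the bookkeeping at small $n$, where the hypothesis $\phi(q_n)<M_n-1$ is unavailable. This is handled by the choice of $i_0$ one index beyond the threshold, together with the monotonicity of $r_n$. A second point to check is that the constant $3$, coming from three summands, is absorbed by the extra factor $q_n$ provided by the strict margin $\phi(q_n)<M_n-1$ rather than $\phi(q_n)<M_n$.
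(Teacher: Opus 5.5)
Your proof is correct and uses the same mechanism as the paper: at scale $n$ you split the triple sum into the terms of index $\le n$, which lie in $G_n$ because $q_m\mid q_n$, and a remainder of at most three small terms that is bounded by $r_n$. The paper instead passes to a sparse subsequence $a_k=1/q_{N_k}$ with $a_{k+1}\le a_k/2$ and bounds the remainder by a geometric tail, $6a_{t+1}\le r_n$; your inequality $r_nq_{n+1}=q_n^{M_n-\phi(q_n)}>q_n\ge 3$ shows that this sparsification is unnecessary, and it makes explicit where the hypothesis $\phi(q_n)<M_n-1$ is used, which the paper's justification (``possible because $q_N\to\infty$'') leaves implicit.
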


\begin{proof}
Choose an increasing sequence of positive integers
$$
2\le N_1<N_2<N_3<\dots
$$
and define
$$
a_k=\frac{1}{q_{N_k}}.
$$
We choose the integers $N_k$ inductively so that
$$
3a_1\le r_n
\qquad \text{for every } n<N_1,
$$
$$
6a_{k+1}\le r_n
\qquad \text{for every } n<N_{k+1},
$$
and
$$
a_{k+1}\le \frac12 a_k
\qquad \text{for every } k\ge 1.
$$
This is possible because $q_N\to\infty$ as $N\to\infty$.

Since
$$
a_{k+1}\le \frac12 a_k,
$$
we have
$$
\sum_{\ell=k+1}^\infty a_\ell \le 2a_{k+1}
$$
for every $k\ge 1$.
Hence
$$
3\sum_{\ell=k+1}^\infty a_\ell \le 6a_{k+1}.
$$

Let
$$
A=\{a_1,a_2,a_3,\dots\}.
$$
We will show that
$$
A\subset E
\qquad \text{and} \qquad
A+A+A\subset E.
$$

We first prove that
$$
A\subset E.
$$
Let
$$
a_m=\frac{1}{q_{N_m}}.
$$
We show that $a_m\in E_n$ for every $n$.

If $n\ge N_m$, then $q_{N_m}\mid q_n$, so
$$
a_m\in G_n,
$$
hence
$$
a_m\in E_n.
$$

Now suppose that $n<N_m$. Choose $t \in \mathbb{N}\cup\{0\}$ so that
$$
N_t\le n<N_{t+1} \text{ (case $n\in \mathbb{N}$)}, \, \text{ or } \, n<N_1 \text{ (case $t=0$)}.
$$
Then, $n< N_{t+1}\leq N_m$. Since $(N_k)_k$ is increasing, we have $m\ge t+1$, then
$$
a_m\le \sum_{\ell=t+1}^\infty a_\ell \le 2a_{t+1}.
$$
Since $n<N_{t+1}$, the choice of $N_{t+1}$ gives
$$
6a_{t+1}\le r_n \text{ (case $n\in \mathbb{N}$)}, \, \text{ or } 3a_1\leq r_n \text{ (case $t=0$).}
$$
so
$$
a_m\le 2a_{t+1}\le  r_n.
$$
Because $0\in G_n$, it follows that
$$
\operatorname{dist}(a_m,G_n)\le a_m\le r_n,
$$
and therefore
$$
a_m\in E_n.
$$

Thus $a_m\in E_n$ for every $n$, and so
$$
a_m\in E.
$$
Since $m$ was arbitrary, we have proved that
$$
A\subset E.
$$

Next we prove that
$$
A+A+A\subset E.
$$
Let
$$
x=a_i+a_j+a_k
$$
with $a_i,a_j,a_k\in A$.
We must show that $x\in E_n$ for every $n$.

If $n<N_1$, then
$$
x\le 3a_1\le r_n
$$
by the choice of $N_1$.
Since $0\in G_n$, we get
$$
\operatorname{dist}(x,G_n)\le x\le r_n,
$$
so
$$
x\in E_n.
$$

Now suppose that $n\ge N_1$. Choose $t$ so that
$$
N_t\le n<N_{t+1}.
$$
Write
$$
x=s_t+u_t,
$$
where $s_t$ is the sum of those terms among $a_i,a_j,a_k$ whose indices are at most $t$,
and $u_t$ is the sum of the remaining terms.

Every term contributing to $s_t$ has denominator dividing $q_{N_t}$, and since $N_t\le n$,
each such term belongs to $G_n$. Therefore
$$
s_t\in G_n.
$$

On the other hand, every term contributing to $u_t$ is at most
$$
\sum_{\ell=t+1}^\infty a_\ell,
$$
so
$$
0\le u_t\le 3\sum_{\ell=t+1}^\infty a_\ell \le 6a_{t+1}.
$$
Since $n<N_{t+1}$, the choice of $N_{t+1}$ gives
$$
6a_{t+1}\le r_n.
$$
Hence
$$
u_t\le r_n.
$$

Since $x=s_t+u_t$ and $s_t\in G_n$, it follows that
$$
\operatorname{dist}(x,G_n)\le u_t\le r_n.
$$
Therefore
$$
x\in E_n.
$$

Thus $x\in E_n$ for every $n$, and hence
$$
x\in E.
$$
We conclude that
$$
A+A+A\subset E.
$$

Therefore there exists an infinite sequence
$$
a_1,a_2,a_3,\dots
$$
such that
$$
\{a_1,a_2,a_3,\dots\}\subset E
$$
and
$$
\{a_1,a_2,a_3,\dots\}+\{a_1,a_2,a_3,\dots\}+\{a_1,a_2,a_3,\dots\}\subset E.
$$
\end{proof}
The previous lemma shows that the intersection set $E$ is generated by a highly branching lattice structure. Each lattice point at scale $q_i$ produces many descendants at scale $q_{i+1}$ that remain inside the neighborhood defining $E_i$. Because the lattices are nested, additive relations among lattice points persist across all scales. This branching additive structure allows one to construct an infinite set $A \subset E$ such that $A+A+A \subset E$. A theorem of Bourgain shows that any set containing $A+A+A$ for some infinite set $A$ satisfies the conclusion of the Erd\H{o}s similarity conjecture. Therefore, the sets constructed here satisfy the conjecture.

\begin{corollary}
Assume that
$$
\phi(q_n)<M_n-1
$$
for all sufficiently large $n$.
Then the set $E$ satisfies the conclusion of the Erd\H{o}s similarity conjecture.
\end{corollary}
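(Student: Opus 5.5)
The corollary follows by combining Lemma \ref{lemma:triplesum} with Bourgain's theorem \cite{Bourgain2003}. First, under the hypothesis $\phi(q_n)<M_n-1$ for all sufficiently large $n$, Lemma \ref{lemma:triplesum} gives an infinite set $A=\{a_1,a_2,\dots\}$ with $a_k=1/q_{N_k}$ for a suitably sparse increasing sequence $(N_k)$, such that $A\subset E$ and $A+A+A\subset E$. The points $a_k$ are distinct and strictly decreasing, since $a_{k+1}\le \frac12 a_k$, so $A$ is genuinely infinite. Bourgain's theorem requires exactly this.

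Next, I invoke Bourgain's theorem in the form used throughout the paper. If a set $S\subset\mathbb R$ contains $A+A+A$ for some infinite set $A$, then for every measurable set $F$ of positive Lebesgue measure there is a set of positive measure which contains no affine copy $x+tS$ with $t\neq 0$. Applied with $S=E$, this gives the statement that $E$ "satisfies the conclusion of the Erd\H{o}s similarity conjecture": there is a measurable set of positive measure containing no affine copy of $E$. This meaning matches the one fixed in the introduction.

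Two checks are needed. First, $E$ must be infinite, so that the conjecture applies to it nontrivially. This is immediate from $A\subset E$, or from Lemma \ref{lemma:largeintersection}, which shows that $E$ is uncountable. Second, Bourgain's theorem must apply to $E$ itself and not merely to $A+A+A$. Any affine copy of $E$ contains the corresponding affine copy of $A+A+A$, so a positive-measure set avoiding all affine copies of $A+A+A$ also avoids all affine copies of $E$. The only step that needs care is stating Bourgain's result precisely, namely that triple sumsets of infinite sets are not universal, and applying it through this monotonicity. No new estimates are needed beyond Lemma \ref{lemma:triplesum}.
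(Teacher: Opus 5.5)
Your proposal is correct and follows the paper's proof. Lemma \ref{lemma:triplesum} supplies an infinite set $A\subset E$ with $A+A+A\subset E$, Bourgain's theorem then gives the conclusion, and your monotonicity remark (every affine copy of $E$ contains an affine copy of $A+A+A$) is exactly why non-universality of $A+A+A$ passes to $E$; the only blemish is the stray quantifier ``for every measurable set $F$ of positive Lebesgue measure'' in your statement of Bourgain's theorem, which plays no role and should simply be deleted.
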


\begin{proof}
By Lemma \ref{lemma:triplesum}, there exists an infinite sequence
$$
a_1,a_2,a_3,\dots
$$
such that
$$
\{a_1,a_2,a_3,\dots\}\subset E
$$
and
$$
\{a_1,a_2,a_3,\dots\}+\{a_1,a_2,a_3,\dots\}+\{a_1,a_2,a_3,\dots\}\subset E.
$$
The conclusion now follows from Bourgain's theorem \cite{Bourgain2003}.
\end{proof}

Before proving the next result, we introduce a new definition.

\begin{definition}
The logarithmic upper box dimension of a set $E$ is defined as
\[
\overline{\dim}^{\log}_{B}(E)
:=
\limsup_{\delta \to 0}
\frac{\log N_\delta(E)}{\log\!\big(\log(1/\delta)\big)},
\] where \(N_\delta(E)\) denotes the least number of sets of diameter at most \(\delta\)
needed to cover \(E\).
\end{definition}

\begin{lemma}
Let \(E \subset \mathbb{R}^d\) be a nonempty bounded set. Then
\[
\dim^{\log}_{P}(E)\leq \overline{\dim}^{\log}_{B}(E).
\]
\end{lemma}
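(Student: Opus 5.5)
This is the logarithmic analogue of the classical inequality $\dim_P \le \overline{\dim}_B$, and I will follow the classical argument. Since $\dim^{\log}_P(E)$ is an infimum, it is enough to fix $s>\overline{\dim}^{\log}_{B}(E)$ and show $\mathcal{P}^{h_s}(E)=0$, where $h_s(r)=(\log(1/r))^{-s}$. By the definition of $\mathcal P^{h_s}$ as an infimum over countable covers, we may take the trivial cover $E_1=E$. So it suffices to show that the packing pre-measure vanishes: $\mathcal{P}^{h_s}_0(E)=0$.

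\textbf{Step 1 (bounding the covering numbers).} Pick $t$ with $\overline{\dim}^{\log}_{B}(E)<t<s$. By the definition of the logarithmic upper box dimension there is $\delta_0\in(0,1)$ such that
\[
N_\delta(E)\le (\log(1/\delta))^{t}\qquad\text{for all }0<\delta<\delta_0.
\]

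\textbf{Step 2 (dyadic decomposition of a packing).} Take any $\delta$-packing $\{B(x_i,r_i)\}$ of $E$ with $\delta<\delta_0$, so the centres lie in $E$ and the balls are disjoint. Sort the radii dyadically: for $k$ with $2^{-k}<\delta$, let $\mathcal B_k$ be the balls with $2^{-k-1}<r_i\le 2^{-k}$.
\begin{itemize}
\item Cover $E$ by $N_{2^{-k-1}}(E)$ sets of diameter at most $2^{-k-1}$.
\item Each such set contains only a bounded number $C_d$ of centres from $\mathcal B_k$. Indeed, two centres in the same set are within distance $2^{-k-1}$, while the corresponding balls are disjoint with radii greater than $2^{-k-1}$; a volume comparison bounds how many such balls fit. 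In dimension $1$ one can take $C_d=1$.
\item Hence
\[
\#\mathcal B_k\le C_d\,N_{2^{-k-1}}(E)\le C_d\big((k+1)\log 2\big)^{t}.
\]
\end{itemize}

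\textbf{Step 3 (summing the series).} Since $h_s$ is increasing, each ball in $\mathcal B_k$ contributes at most $h_s(2^{-k})=(k\log 2)^{-s}$. Summing over the dyadic classes,
\[
\sum_i h_s(r_i)\le C\sum_{k\ge \log_2(1/\delta)} (k+1)^{t}\,k^{-s}.
\]
This is a tail of a series with terms of order $k^{t-s}$, which converges only if $s-t>1$. This is the main obstacle: a purely polynomial gain in $\log(1/r)$ is too weak for the crude counting of Step 2 to produce a summable series.

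\textbf{Fallback.} I will first try to avoid the loss by exploiting the logarithmic scale: group the radii into classes $e^{-e^{j}}<r_i\le e^{-e^{j-1}}$ instead of dyadic classes. Within one class $\log(1/r)$ is comparable to $e^j$, so $h_s(r)$ is essentially constant, and the number of disjoint balls with centres in $E$ and radius at least $e^{-e^j}$ is at most $C\,N_{e^{-e^j}}(E)\lesssim e^{jt}$. The sum over class $j$ is then at most about $e^{jt}\,e^{-(j-1)s}$. This is geometric in $j$ and summable whenever $s>t$, and its tail tends to $0$ as $\delta\to0$. That gives $\mathcal P^{h_s}_0(E)=0$, hence $\mathcal P^{h_s}(E)=0$, and therefore $\dim^{\log}_P(E)\le s$. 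Letting $s\downarrow\overline{\dim}^{\log}_B(E)$ finishes the proof.

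The point to check carefully is the count within one class. Balls in class $j$ have radii between $e^{-e^j}$ and $e^{-e^{j-1}}$, and disjoint balls of radius at least $\rho=e^{-e^j}$ with centres in $E$ number at most $C\,N_{\rho}(E)$ by the same argument as in Step 2. So the bound holds, using Step 1 at scale $\rho$.
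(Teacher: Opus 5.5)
Your final argument (the fallback) is correct. It uses the same key fact as the paper: disjoint balls with centres in $E$ and radii at least $\rho$ number at most $N_\rho(E)\le (\log(1/\rho))^{t}$.

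The difference is in how $\sum_i h_s(r_i)$ is organized. The paper does not split the radii into classes at all. It writes $h_s(r_i)=\int_{\log(1/r_i)}^\infty s\,u^{-s-1}\,du$ and applies Tonelli, which gives $\sum_i h_s(r_i)=\int_T^\infty s\,u^{-s-1}F(u)\,du$ with the cumulative count $F(u)=\#\{i: r_i\ge e^{-u}\}$. It then integrates the bound $F(u)\le C_1u^{\gamma}$ to get $\frac{C_1 s}{s-\gamma}T^{\gamma-s}$.

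Your classes $e^{-e^{j}}<r_i\le e^{-e^{j-1}}$ are a discrete version of this Abel summation. Because they are geometric in $\log(1/r)$, bounding each class by the cumulative count at its lower endpoint costs only the factor $e^{s}$, which is how much $h_s$ varies within a class. What remains is a geometric series.

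Your diagnosis of the dyadic attempt is also right. With classes in arithmetic progression in $\log(1/r)$, reusing the cumulative bound in every class loses a full power of $\log(1/\delta)$. The result is $\sum_{k\ge K}k^{t-s}$, which diverges unless $s>t+1$. Both the paper's layer-cake identity and your doubly exponential grouping avoid exactly this loss.

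The paper's route is shorter and needs no choice of scales. Yours avoids integrals and makes the role of the scale choice explicit.

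One small simplification: Step 2 needs no volume comparison in any dimension. Two centres lying in a set of diameter at most $\rho$, belonging to balls of radii greater than $\rho$, would force those balls to intersect. So $C_d=1$ always.
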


\begin{proof}
Write
\[
\beta:=\overline{\dim}^{\log}_{B}(E).
\]

Fix any \(s>\beta\). Choose \(\gamma\) such that
\[
\beta<\gamma<s.
\]
By the definition of \(\overline{\dim}^{\log}_{B}(E)\), there exist \(C>0\) and
\(\delta_{0}\in(0,1/2)\) such that
\[
N_\delta(E)\leq C \bigl(\log(1/\delta)\bigr)^{\gamma}
\qquad \text{for all } 0<\delta<\delta_{0}.
\]
Let
\[
\phi(r):=(\log(1/r))^{-s}, \qquad 0<r<1.
\]
It is enough to prove that \(\mathcal P^{\phi}(E)=0\), since then, by the definition of
logarithmic packing dimension,
\[
\dim^{\log}_{P}(E)\le s,
\]
and letting \(s\downarrow\beta\) gives the claim.

We first show that \(\mathcal P^{\phi}_{0}(E)=0\). Let
\(\{B(x_i,r_i)\}_i\) be any \(\delta\)-packing of \(E\), with \(0<\delta<\delta_{0}/2\).
Set
\[
T:=\log(1/\delta).
\]
Since \(r_i<\delta\) for all \(i\), we have \(\log(1/r_i)>T\).

For \(t\ge T\), define
\[
F(t):=\#\{i: r_i\ge e^{-t}\}= \sum_i \chi_{\{t: \, t\geq \log(\frac{1}{r_i})\}}(t).
\]
If \(r_i\ge e^{-t}\), since the balls \(B(x_i,r_i)\) are disjoint, so their centers
\(x_i\) are pairwise at distance at least \(2e^{-t}\). Hence any set of diameter at most
\(2e^{-t}\) contains at most one such center. Therefore
\[
F(t)\le N_{2e^{-t}}(E).
\]
Since \(t\ge T>\log(2/\delta_0)\), we have \(2e^{-t}<\delta_0\), so
\[
F(t)\le C\bigl(\log(e^t/2)\bigr)^\gamma
= C(t-\log 2)^\gamma
\le C_1 t^\gamma
\]
for some constant \(C_1>0\) and all \(t\ge T\).

Now use the identity
\[
\phi(r_i)=(\log(1/r_i))^{-s}
=\int_{\log(1/r_i)}^\infty s\, t^{-s-1}\,dt.
\]
Summing over \(i\) and applying Tonelli's theorem,
\[
\sum_i \phi(r_i)
=
\sum_i \int_{\log(1/r_i)}^\infty s\, t^{-s-1}\,dt
=
\int_T^\infty s\, t^{-s-1} F(t)\,dt.
\]
Using the bound on \(F(t)\), we get
\[
\sum_i \phi(r_i)
\le
C_1 s \int_T^\infty t^{\gamma-s-1}\,dt
=
\frac{C_1 s}{s-\gamma}\, T^{\gamma-s}.
\]
Since \(\gamma<s\), the exponent \(\gamma-s\) is negative, and thus
\[
\frac{C_1 s}{s-\gamma}\, T^{\gamma-s}\to 0
\qquad \text{as } \delta\to 0
\]
(that is, as \(T\to\infty\)). Taking the supremum over all \(\delta\)-packings and then
letting \(\delta\to 0\), we conclude that
\[
\mathcal P^{\phi}_{0}(E)=0.
\]
Therefore also
\[
\mathcal P^{\phi}(E)=0.
\]

Since this holds for every \(s>\beta\), it follows from the definition of
\(\dim^{\log}_{P}(E)\) that
\[
\dim^{\log}_{P}(E)\le \beta
=
\overline{\dim}^{\log}_{B}(E).
\]
This completes the proof.
\end{proof}

\begin{proposition}[Small logarithmic packing dimension]\label{prop:logpackingless2}
Let
$$
q_1=2, \qquad q_{i+1}=q_i^{M_i},
$$
and define
$$
G_i=\frac{1}{q_i}\big([0,q_i]\cap {\mathbb Z}\big), \qquad
r_i=q_i^{-\phi(q_i)},
$$
$$
E_i=\{x\in[0,1]:\operatorname{dist}(x,G_i)\le r_i\},
\qquad
E=\bigcap_{i=1}^{\infty}E_i.
$$

Assume that $\phi$ is monotone increasing and tends to infinity.

Suppose there exist $\varepsilon>0$ and $C>0$ such that for all sufficiently large $n$,
$$
\prod_{j=1}^{n-1}\left(1+q_j^{M_j-\phi(q_j)}\right)
\le
C \left(\phi(q_{n-1})\log q_{n-1}\right)^{2-\varepsilon}.
$$

Then
$$
\dim_P^{\log}(E)\le 2-\varepsilon.
$$

In particular,
$$
\dim_P^{\log}(E)<2.
$$
\end{proposition}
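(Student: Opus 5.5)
By the preceding lemma, $\dim^{\log}_{P}(E)\le \overline{\dim}^{\log}_{B}(E)$, so it suffices to show $\overline{\dim}^{\log}_{B}(E)\le 2-\varepsilon$. In other words, we need $N_\delta(E)\lesssim (\log(1/\delta))^{2-\varepsilon}$ up to constants, uniformly over all small $\delta$. We will do this by counting the intervals of the natural covers $E\subset \bigcap_{j\le n}E_j$ at the two scales that occur in the construction: the lattice spacing $1/q_n$ and the neighborhood radius $r_n$.

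\textbf{Step 1: counting intervals.} Let $I_n$ be the number of intervals of length $2r_n$, centered at points of $G_n$, that meet $\bigcap_{j<n}E_j$ and are needed to cover $\bigcap_{j\le n}E_j$. Each such level-$n$ interval sits inside a level-$(n-1)$ interval of radius $r_{n-1}$. By the counting in the branching lemma, a level-$(n-1)$ interval contains at most $2r_{n-1}q_n+1\lesssim 1+q_{n-1}^{M_{n-1}-\phi(q_{n-1})}$ points of $G_n$. Here any factor of $2$ is absorbed: a level-$(n-1)$ interval of radius $r_{n-1}$ contains at most $2q_{n-1}^{M_{n-1}-\phi}+1\le 3\bigl(1+q_{n-1}^{M_{n-1}-\phi}\bigr)$ points. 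Since $r_j<1/(4q_j)$ eventually, the intervals at each level are disjoint, so no double counting occurs, and the hypothesis bounds the product of the branching factors.

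The real subtlety is the multiplicative constants $3^n$. I would deal with these by counting exactly. The number of points of $G_n$ in $[g-r_{n-1},g+r_{n-1}]$ is $2\lfloor r_{n-1}q_n\rfloor+1$, and $2x+1\le 3(1+x)$ can be replaced by a sharper comparison. Alternatively, one can note that the hypothesis with $2-\varepsilon$ also holds with $2-\varepsilon'$ for any slightly smaller $\varepsilon'$ after the factors are absorbed, since $\phi(q_{n-1})\log q_{n-1}$ grows much faster than $C^n$ (because $\log q_n=\prod M_j\log 2$ grows superexponentially when $M_j\to\infty$). Either way, we get $I_n\lesssim P_n:=\prod_{j=1}^{n-1}\bigl(1+q_j^{M_j-\phi(q_j)}\bigr)$ up to harmless factors.

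\textbf{Step 2: bounding $N_\delta(E)$ for arbitrary $\delta$.} Note $\log(1/r_{n-1})=\phi(q_{n-1})\log q_{n-1}$. Consider first $r_n\le\delta< r_{n-1}$. There are two regimes.

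If $\delta\ge 1/q_n$, each of the $I_{n-1}$ intervals of length $2r_{n-1}$ contains about $r_{n-1}q_n$ subintervals of spacing $1/q_n$. These are covered by about $r_{n-1}/\delta+1$ sets of diameter $\delta$, giving
\[
N_\delta(E)\lesssim I_{n-1}(1+r_{n-1}/\delta)\le I_n\lesssim P_n .
\]
Here one must compare against $\log(1/\delta)\ge \log(1/r_{n-1})$. For this I would apply the hypothesis at index $n$ and use monotonicity: $P_n\le C(\log(1/r_{n-1}))^{2-\varepsilon}\le C(\log(1/\delta))^{2-\varepsilon}$.

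If $r_n\le \delta<1/q_n$, then $N_\delta(E)\le I_n\lesssim P_n$ again, with the same comparison since $\delta<r_{n-1}$.

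Taking $\limsup$ then gives $\overline{\dim}^{\log}_B(E)\le 2-\varepsilon$, and the conclusion follows from the lemma.

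\textbf{Main obstacle.} I expect the hardest step to be the bookkeeping in Step 1. This means controlling the constant per-level factors, and checking that the intermediate regime $1/q_n\le\delta<r_{n-1}$ really gives the interpolated bound $I_{n-1}(1+r_{n-1}/\delta)\le P_n$. That bound holds because $r_{n-1}/\delta\le r_{n-1}q_n$, which is exactly the $n-1$ branching factor. I would first try the exact lattice-point count, and fall back on the superexponential growth of $\log q_n$ to absorb the $3^n$ factors.
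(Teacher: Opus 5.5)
Your proposal is correct and follows essentially the paper's argument. You cover $\bigcap_{j\le n}E_j$ by at most $C_1^n\prod_{j<n}\bigl(1+q_j^{M_j-\phi(q_j)}\bigr)$ intervals of length comparable to $r_n$. You then handle every $\delta$ between consecutive radii using monotonicity of $N_\delta$, the hypothesis at index $n$, and $\log(1/\delta)\ge\log(1/r_{n-1})=\phi(q_{n-1})\log q_{n-1}$. Finally you conclude via $\dim_P^{\log}\le\overline{\dim}_B^{\log}$; your two-regime split in Step 2 is unnecessary.

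Of your two ways of handling the $C_1^n$ factor, only the second works. Exact lattice counting still leaves a factor close to $2$ per level, which is genuinely present when $r_{n-1}q_n$ is large. Absorbing the factor by losing an arbitrary $\eta>0$ in the exponent, using that $\log q_n=\log 2\prod_{j<n}M_j$ grows superexponentially, is the right step. It is in fact more accurate than the paper's claim that $C_1^n$ can be absorbed into a constant $C_2$.
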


\begin{proof}
For each $n$, let
$$
F_n=\bigcap_{j=1}^n E_j.
$$
Then
$$
E\subset F_n.
$$

We claim that there exists an absolute constant $C_1>0$ such that for every $n\ge 1$, the set $F_n$ can be covered by at most
$$
C_1^n \prod_{j=1}^{n-1}\left(1+q_j^{M_j-\phi(q_j)}\right)
$$
intervals of length at most $4r_n$.

For $n=1$, this is immediate, since $F_1=E_1$ is a finite union of intervals of length at most $2r_1$.

Assume inductively that $F_j$ can be covered by a collection of intervals of length at most $4r_j$. Let $I$ be one such interval. Since
$$
F_{j+1}=F_j\cap E_{j+1},
$$
the set $F_{j+1}\cap I$ is contained in the union of the intervals of radius $r_{j+1}$ around those points of $G_{j+1}$ whose distance to $I$ is at most $r_{j+1}$.

The $r_{j+1}$-neighborhood of $I$ has length at most
$$
4r_j+2r_{j+1}\le 6r_j,
$$
because $r_{j+1}\le r_j$. Since the points of $G_{j+1}$ are spaced by $1/q_{j+1}$, the number of points of $G_{j+1}$ contained in this neighborhood is bounded by a constant multiple of
$$
1+r_j q_{j+1}.
$$
Therefore $F_{j+1}\cap I$ can be covered by at most
$$
C_1\left(1+r_j q_{j+1}\right)
$$
intervals of length at most $2r_{j+1}$, hence also by the same number of intervals of length at most $4r_{j+1}$.

Since
$$
r_j q_{j+1}=q_j^{M_j-\phi(q_j)},
$$
it follows that each interval at stage $j$ produces at most
$$
C_1\left(1+q_j^{M_j-\phi(q_j)}\right)
$$
intervals at stage $j+1$. Iterating this estimate proves the claim.

Consequently,
$$
N(E,4r_n)\le N(F_n,4r_n)
\le
C_1^n \prod_{j=1}^{n-1}\left(1+q_j^{M_j-\phi(q_j)}\right).
$$

By hypothesis,
$$
N(E,4r_n)
\le
C_1^n C \left(\phi(q_{n-1})\log q_{n-1}\right)^{2-\varepsilon}
$$
for all sufficiently large $n$.

Since $q_n=q_{n-1}^{M_{n-1}}$ and $M_{n-1}\to\infty$, the sequence $q_n$ grows at least doubly exponentially for all sufficiently large $n$. Hence
$$
\log q_{n-1}
$$
grows at least exponentially in $n$. Because $\phi(q_{n-1})\to\infty$, the quantity
$$
\phi(q_{n-1})\log q_{n-1}
$$
eventually dominates every exponential factor $C_1^n$. Therefore, after increasing the constant if necessary, we obtain
$$
N(E,4r_n)
\le
C_2 \left(\phi(q_{n-1})\log q_{n-1}\right)^{2-\varepsilon}
$$
for all sufficiently large $n$.

Now let $\delta>0$ be sufficiently small, and choose $n$ so that
$$
4r_n\le \delta<4r_{n-1}.
$$
By monotonicity of covering numbers,
$$
N(E,\delta)\le N(E,4r_n).
$$
Therefore
$$
N(E,\delta)\le
C_2 \left(\phi(q_{n-1})\log q_{n-1}\right)^{2-\varepsilon}.
$$

Since
$$
\delta<4r_{n-1},
$$
we have
$$
\log\frac1\delta \ge \log\frac1{4r_{n-1}}.
$$
For all sufficiently large $n$,
$$
\log\frac1{4r_{n-1}} \ge \frac12 \log\frac1{r_{n-1}}
=
\frac12 \phi(q_{n-1})\log q_{n-1}.
$$
Hence
$$
\phi(q_{n-1})\log q_{n-1}
\le
2\log\frac1\delta,
$$
and so
$$
N(E,\delta)\le
C_3\left(\log\frac1\delta\right)^{2-\varepsilon}
$$
for all sufficiently small $\delta$.

It follows that the logarithmic upper box dimension of $E$ is at most $2-\varepsilon$. Since logarithmic packing dimension is bounded above by logarithmic upper box dimension, we conclude that
$$
\dim_P^{\log}(E)\le 2-\varepsilon.
$$

In particular,
$$
\dim_P^{\log}(E)<2.
$$
\end{proof}

We now record a logarithmic Hausdorff dimension analogue of Proposition 2.8 under a stronger growth assumption.

\begin{proposition}[Small logarithmic Hausdorff dimension]
Let
$$
q_1=2, \qquad q_{i+1}=q_i^{M_i},
$$
and define
$$
G_i=\frac{1}{q_i}\big([0,q_i]\cap {\mathbb Z}\big), \qquad
r_i=q_i^{-\phi(q_i)},
$$
$$
E_i=\{x\in[0,1]:\operatorname{dist}(x,G_i)\le r_i\},
\qquad
E=\bigcap_{i=1}^{\infty}E_i.
$$

Assume that $\phi$ is monotone increasing and tends to infinity.

Suppose there exist $\varepsilon>0$ and $C>0$ such that for all sufficiently large $n$,
$$
\prod_{j=1}^{n-1}\left(1+q_j^{M_j-\phi(q_j)}\right)
\le
C \left(\phi(q_n)\log q_n\right)^{1-\varepsilon}.
$$

Then
$$
\dim_H^{\log}(E)\le 1-\varepsilon.
$$

In particular,
$$
\dim_H^{\log}(E)<1.
$$
\end{proposition}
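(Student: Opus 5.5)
The plan follows the covering argument of Proposition \ref{prop:logpackingless2}, but uses the covers directly in the definition of $\mathcal H^{h_s}$. For packing dimension we needed covers at \emph{every} scale. For Hausdorff dimension it suffices to have one good cover along a sequence of scales tending to zero, and this is why the product can be compared with $\phi(q_n)\log q_n = \log(1/r_n)$ instead of the weaker $\phi(q_{n-1})\log q_{n-1}$. Concretely, I will show that $\mathcal H^{h_s}(E)=0$ for every $s>1-\varepsilon$, where $h_s(r)=(\log(1/r))^{-s}$.

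\textbf{Step 1 (covering count).} From the proof of Proposition \ref{prop:logpackingless2}, for every $n$ the set $F_n=\bigcap_{j\le n}E_j\supset E$ is covered by at most
$$
N_n:=C_1^n\prod_{j=1}^{n-1}\left(1+q_j^{M_j-\phi(q_j)}\right)
$$
intervals of length at most $4r_n$. By hypothesis, $N_n\le C\,C_1^n\left(\phi(q_n)\log q_n\right)^{1-\varepsilon}$. Since $\log q_n$ grows at least exponentially in $n$ (as $M_i\to\infty$ and $M_i\ge 2$ eventually), the factor $C_1^n$ is at most $(\log q_n)^{\eta}$ for any fixed $\eta>0$ once $n$ is large. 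Indeed, $\log q_n\ge c\,2^{n}$ eventually, and in fact it grows faster than any exponential, because $\log q_{n+1}=M_n\log q_n$ with $M_n\to\infty$. Hence
$$
C_1^n=o\big((\log q_n)^{\eta}\big).
$$

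\textbf{Step 2 (estimate the Hausdorff sum).} Each covering interval has diameter $4r_n\to0$, and
$$
h_s(4r_n)=\left(\log\tfrac{1}{4r_n}\right)^{-s}\le \left(\tfrac12\phi(q_n)\log q_n\right)^{-s}
$$
for large $n$. Writing $L_n=\phi(q_n)\log q_n$, the cover sum is therefore
$$
\sum h_s(\operatorname{diam})\le C'\,C_1^n\,L_n^{1-\varepsilon-s}\le C'\,L_n^{1-\varepsilon+\eta-s}.
$$
Choose $\eta<s-(1-\varepsilon)$. Since $L_n\to\infty$, the sum tends to $0$. Thus for every $\delta>0$ the $\delta$-cover infimum is $0$, so $\mathcal H^{h_s}(E)=0$ and $\dim_H^{\log}(E)\le s$. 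Letting $s\downarrow 1-\varepsilon$ gives the claim.

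\textbf{Main obstacle.} The only delicate point is absorbing the exponential factor $C_1^n$ coming from the branching constant. I will handle it as in Proposition \ref{prop:logpackingless2}, using the superexponential growth of $\log q_n$, which follows from $M_n\to\infty$. If that growth were insufficient, I would instead try to sharpen the induction so that the constant is absorbed into each factor $1+q_j^{M_j-\phi(q_j)}$. Finally, I will check that the sets here are bounded (they lie in $[0,1]$), so that finitely many intervals suffice at each stage and no countable-cover subtleties arise.
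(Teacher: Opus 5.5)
Your proposal is correct and follows essentially the same route as the paper. You reuse the covering of $F_n$ by at most $C_1^n\prod_{j=1}^{n-1}(1+q_j^{M_j-\phi(q_j)})$ intervals of length at most $4r_n$ from Proposition~\ref{prop:logpackingless2}, absorb $C_1^n$ into $(\phi(q_n)\log q_n)^{\eta}$ with $\eta<s-(1-\varepsilon)$, and bound the $h_s$-cost of that single cover by $C'(\phi(q_n)\log q_n)^{1-\varepsilon+\eta-s}\to 0$. As in the paper, the absorption step needs $M_n\to\infty$, so that $\log q_n=\log 2\prod_{j<n}M_j$ outgrows every exponential; note that this comes from the standing setup, not from the hypotheses stated in the proposition itself.
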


\begin{proof}
For each $n$, let
$$
F_n=\bigcap_{j=1}^n E_j.
$$
Then
$$
E\subset F_n.
$$

Exactly as in the proof of Proposition 2.8, there exists an absolute constant $C_1>0$ such that $F_n$ can be covered by at most
$$
C_1^n \prod_{j=1}^{n-1}\left(1+q_j^{M_j-\phi(q_j)}\right)
$$
intervals of length at most $4r_n$.

Therefore, by the hypothesis, for all sufficiently large $n$ the set $E$ can be covered by at most
$$
C_1^n C \left(\phi(q_n)\log q_n\right)^{1-\varepsilon}
$$
intervals of length at most $4r_n$.

Fix $s>1-\varepsilon$. We will show that the logarithmic Hausdorff $s$-measure of $E$ is zero.

Let
$$
h_s(r)=\left(\log\frac{1}{r}\right)^{-s}
$$
for sufficiently small $r>0$.
Since
$$
r_n=q_n^{-\phi(q_n)},
$$
we have
$$
\log\frac{1}{r_n}=\phi(q_n)\log q_n.
$$

Because $q_n\to\infty$ and $\phi(q_n)\to\infty$, it follows that
$$
\phi(q_n)\log q_n\to\infty.
$$
Also, since
$$
q_{n+1}=q_n^{M_n}
$$
and
$$
M_n\to\infty,
$$
the quantity
$$
\phi(q_n)\log q_n
$$
eventually dominates every exponential factor. In particular, if we set
$$
\delta=\frac{s-(1-\varepsilon)}{2}>0,
$$
then for all sufficiently large $n$,
$$
C_1^n\le \left(\phi(q_n)\log q_n\right)^\delta.
$$

Hence for all sufficiently large $n$,
$$
C_1^n C \left(\phi(q_n)\log q_n\right)^{1-\varepsilon}
\le
C \left(\phi(q_n)\log q_n\right)^{1-\varepsilon+\delta}.
$$

Now consider the above cover of $E$ by intervals $I$ with
$$
|I|\le 4r_n.
$$
Since $h_s$ is increasing for sufficiently small $r$, we get
$$
h_s(|I|)\le h_s(4r_n)
$$
for every interval in the cover. Therefore the $h_s$-cost of this cover is bounded by
$$
C_1^n C \left(\phi(q_n)\log q_n\right)^{1-\varepsilon} h_s(4r_n).
$$

We next estimate $h_s(4r_n)$. Since
$$
\log\frac{1}{4r_n}=\log\frac{1}{r_n}-\log 4
=\phi(q_n)\log q_n-\log 4,
$$
for all sufficiently large $n$ we have
$$
\log\frac{1}{4r_n}\ge \frac{1}{2}\phi(q_n)\log q_n.
$$
Hence
$$
h_s(4r_n)
=
\left(\log\frac{1}{4r_n}\right)^{-s}
\le
2^s \left(\phi(q_n)\log q_n\right)^{-s}.
$$

It follows that
$$
\mathcal H^{h_s}_{4r_n}(E)
\le
C_1^n C \left(\phi(q_n)\log q_n\right)^{1-\varepsilon} 2^s
\left(\phi(q_n)\log q_n\right)^{-s}.
$$
Using
$$
C_1^n\le \left(\phi(q_n)\log q_n\right)^\delta
$$
for large $n$, we obtain
$$
\mathcal H^{h_s}_{4r_n}(E)
\le
C' \left(\phi(q_n)\log q_n\right)^{1-\varepsilon+\delta-s}.
$$
By the choice of $\delta$,
$$
1-\varepsilon+\delta-s
=
-\delta<0.
$$
Therefore
$$
\mathcal H^{h_s}_{4r_n}(E)\to 0
$$
as $n\to\infty$.

Hence
$$
\mathcal H^{h_s}(E)=0
$$
for every $s>1-\varepsilon$.
This implies that
$$
\dim_H^{\log}(E)\le 1-\varepsilon.
$$

In particular,
$$
\dim_H^{\log}(E)<1.
$$
\end{proof}

The construction exhibits a threshold governed by the comparison between $\phi(q_i)$ and $M_i$. Indeed, since
$$
q_{i+1}=q_i^{M_i},
$$
the spacing of the next lattice is
$$
\frac{1}{q_{i+1}}=q_i^{-M_i},
$$
while the radius at stage $i$ is
$$
r_i=q_i^{-\phi(q_i)}.
$$
If
$$
\phi(q_i)>M_i
$$
eventually, then the neighborhoods are too small to capture genuinely new descendants at the next scale, and the intersection collapses to a finite set. If
$$
\phi(q_i)<M_i-1
$$
eventually, then each surviving interval contains many descendants at the next scale, which leads to an uncountable intersection and, in fact, to the existence of an infinite set $A$ with
$$
A+A+A\subset E.
$$
Thus the behavior is governed by the relation between the decay exponent $\phi(q_i)$ and the lattice growth exponent $M_i$.

\begin{corollary}\label{cor:explicitlogpacking}
There exist choices of the sequence $(M_n)$ and the monotone increasing function $\phi$ with $\phi(t)\to\infty$ such that
$$
\phi(q_n)<M_n-1
$$
for every $n$ and
$$
\dim_P^{\log}(E)\le 1.
$$
In particular,
$$
\dim_P^{\log}(E)<2.
$$
\end{corollary}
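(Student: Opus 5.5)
The plan is to apply Proposition~\ref{prop:logpackingless2} with $\varepsilon=1$. Its conclusion is then exactly $\dim_P^{\log}(E)\le 1<2$. So the whole task is to choose $(M_n)$ and $\phi$ so that $\phi(q_n)<M_n-1$ holds for every $n$ and
$$
\prod_{j=1}^{n-1}\left(1+q_j^{M_j-\phi(q_j)}\right)\le C\,\phi(q_{n-1})\log q_{n-1}
$$
holds for all large $n$.

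The key observation is that $\phi(q_j)<M_j-1$ forces each factor to be at least $1+q_j$. The product is therefore enormous, and it must be beaten by making $\phi(q_{n-1})$, hence $M_{n-1}$, even larger. Since $q_{n-1}$ is determined by $M_1,\dots,M_{n-2}$ alone, we can choose $M_{n-1}$ \emph{after} seeing the product, which makes an inductive choice natural.

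Concretely, I will set $\phi(q_j)=M_j-2$, so that each factor equals $1+q_j^{2}$. Define $M_j$ recursively by
$$
M_j=2+\prod_{i=1}^{j}\left(1+q_i^{2}\right),
$$
with $q_1=2$ and $q_{j+1}=q_j^{M_j}$. This is well defined, because the product up to $j$ only involves $q_1,\dots,q_j$, which depend on $M_1,\dots,M_{j-1}$. The resulting $M_j$ are positive integers, strictly increasing, and tend to infinity. Moreover $\phi(q_j)=M_j-2\ge 5>0$ and $\phi(q_j)<M_j-1$ for every $j$. Then
$$
\prod_{j=1}^{n-1}\left(1+q_j^{M_j-\phi(q_j)}\right)=M_{n-1}-2=\phi(q_{n-1})\le \frac{1}{\log 2}\,\phi(q_{n-1})\log q_{n-1},
$$
since $q_{n-1}\ge 2$. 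This is the hypothesis with $C=1/\log 2$ and $\varepsilon=1$.

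It remains to extend $\phi$ from the points $q_j$ to a function on $[2,\infty)$ that is monotone increasing, tends to infinity and takes the prescribed values at the $q_j$. Linear interpolation between consecutive $q_j$ works, because both $q_j$ and $M_j-2$ are strictly increasing. Only the values $\phi(q_j)$ enter $E$, Proposition~\ref{prop:logpackingless2} and the condition $\phi(q_n)<M_n-1$, so this extension is harmless.

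The only point needing care, and the main obstacle, is the circularity that the required size of $M_{n-1}$ depends on the $q_j$ while the $q_j$ depend on the $M$'s. The recursive definition above resolves it, since $M_j$ only needs $q_1,\dots,q_j$. Once the hypothesis is verified, Proposition~\ref{prop:logpackingless2} gives $\dim_P^{\log}(E)\le 1$, and in particular $\dim_P^{\log}(E)<2$.
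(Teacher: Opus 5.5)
Your proposal is correct and follows essentially the same route as the paper: set $\phi(q_j)=M_j-2$ so that each factor becomes $1+q_j^2$, choose $M_j$ recursively to dominate $\prod_{i\le j}(1+q_i^2)$, and apply Proposition~\ref{prop:logpackingless2} with $\varepsilon=1$. The differences are cosmetic: the paper takes $M_n=\max\{M_{n-1}+1,\lceil \frac{2}{\log q_n}\prod_{j\le n}(1+q_j^2)\rceil+2\}$ and a step-function $\phi$, whereas your choice $M_j=2+\prod_{i\le j}(1+q_i^2)$ makes the product exactly $\phi(q_{n-1})$, and your linear interpolation gives a continuous $\phi$.
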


\begin{proof}
Set
$$
q_1=2.
$$
Choose
$$
M_1=\max\left\{4,\ \left\lceil \frac{2(1+q_1^2)}{\log q_1}\right\rceil+2\right\}.
$$
For each $n\ge 2$, once $q_n$ has been defined, choose
$$
M_n=\max\left\{M_{n-1}+1,\ \left\lceil \frac{2}{\log q_n}\prod_{j=1}^{n}(1+q_j^2)\right\rceil+2\right\}.
$$
Then define
$$
q_{n+1}=q_n^{M_n}.
$$

Define $\phi:[2,\infty)\to(0,\infty)$ by
$$
\phi(t)=M_n-2
\qquad \text{for } q_n\le t<q_{n+1}.
$$

Since
$$
M_n-\phi(q_n)=2,
$$
we have
$$
q_n^{M_n-\phi(q_n)}=q_n^2,
$$
so
$$
\prod_{j=1}^{n-1}\left(1+q_j^{M_j-\phi(q_j)}\right)
=
\prod_{j=1}^{n-1}(1+q_j^2).
$$

By the definition of $M_{n-1}$,
$$
\prod_{j=1}^{n-1}(1+q_j^2)\le \frac{1}{2}\phi(q_{n-1})\log q_{n-1}.
$$

Thus the hypothesis of Proposition \ref{prop:logpackingless2} holds, and
$$
\dim_P^{\log}(E)\le 1.
$$
\end{proof}


\begin{corollary}
Under the hypotheses of Proposition \ref{prop:logpackingless2}, the set $E$ has logarithmic packing dimension strictly less than $2$. In particular, the construction falls outside the positive-logarithmic-dimension regime considered in \cite{ShmerkinYavicoli2025}.
\end{corollary}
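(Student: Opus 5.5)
This follows directly from Proposition \ref{prop:logpackingless2}; the only work is to match its conclusion with the threshold that appears in \cite{ShmerkinYavicoli2025}.

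\textbf{Step 1 (the bound).} Under the hypotheses of Proposition \ref{prop:logpackingless2} there are $\varepsilon>0$ and $C>0$ such that, for all large $n$,
$$
\prod_{j=1}^{n-1}\left(1+q_j^{M_j-\phi(q_j)}\right)\le C\left(\phi(q_{n-1})\log q_{n-1}\right)^{2-\varepsilon}.
$$
That proposition then gives $\dim_P^{\log}(E)\le 2-\varepsilon<2$, which is the first assertion. For the record, the mechanism behind it is a covering of $F_n=\bigcap_{j\le n}E_j$ by intervals of length $4r_n$. This yields $N(E,\delta)\lesssim(\log(1/\delta))^{2-\varepsilon}$, because $\log(1/r_{n-1})=\phi(q_{n-1})\log q_{n-1}$. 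The factor $C_1^n$ is absorbed since $\log q_n$ grows super-exponentially in $n$. One then uses the lemma $\dim_P^{\log}\le\overline{\dim}^{\log}_B$.

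\textbf{Step 2 (comparison with \cite{ShmerkinYavicoli2025}).} As recalled in the introduction, \cite{ShmerkinYavicoli2025} treats sets whose logarithmic Hausdorff dimension exceeds $1$ or whose logarithmic packing dimension exceeds $2$.

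The packing alternative is ruled out directly by Step 1, since $\dim_P^{\log}(E)<2$.

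The Hausdorff alternative is ruled out by the standard inequality $\dim_H^{\log}\le\dim_P^{\log}$, which holds because $\mathcal H^{h}\lesssim\mathcal P^{h}$ for doubling gauges. This gives $\dim_H^{\log}(E)\le 2-\varepsilon$, which is weaker than the $\dim_H^{\log}(E)\le 1$ one would want. So the Hausdorff alternative is only excluded outright when the stronger hypothesis of the preceding Hausdorff proposition holds, which gives $\dim_H^{\log}(E)<1$. Corollary \ref{cor:explicitlogpacking} provides a concrete example with $\phi(q_n)<M_n-1$ and $\dim_P^{\log}(E)\le 1$; there both alternatives of \cite{ShmerkinYavicoli2025} fail, since $\dim_H^{\log}(E)\le\dim_P^{\log}(E)\le1$.

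\textbf{Main obstacle.} The difficulty is interpretive rather than technical. The phrase ``falls outside the positive-logarithmic-dimension regime'' should be read as ``outside the packing-dimension-greater-than-$2$ regime of \cite{ShmerkinYavicoli2025}.'' That is exactly what Step 1 gives, and it is how I would state the conclusion, adding the Hausdorff remark as a supplement. I would check that the dimension thresholds in \cite{ShmerkinYavicoli2025} are strict inequalities, so that $<2$ suffices.
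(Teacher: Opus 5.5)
Your proposal is correct and follows the paper, whose proof is just the appeal to Proposition \ref{prop:logpackingless2}; your reading of the ``in particular'' as excluding only the packing criterion of \cite{ShmerkinYavicoli2025} is exactly what that appeal supports. One refinement to your Hausdorff remark: in the regime $\phi(q_i)<M_i-1$ of Theorem \ref{thm:main} you do not need the stronger hypothesis, for the following reason. Applying the bound of Proposition \ref{prop:logpackingless2} at index $n+1$ gives $q_n<q_n^{M_n-\phi(q_n)}\le C(\phi(q_n)\log q_n)^{2-\varepsilon}$. Combine this with $\phi(q_{n-1})\log q_{n-1}<\log q_n$ in the cover of $E$ by $C_1^nC(\phi(q_{n-1})\log q_{n-1})^{2-\varepsilon}$ intervals of length $4r_n$: the $h_s$-cost is at most a constant times $C_1^n(\log q_n)^{2-\varepsilon}q_n^{-s/(2-\varepsilon)}\to0$ for every $s>0$, so $\dim_H^{\log}(E)=0$ in that regime.
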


\begin{proof}
This follows immediately from Proposition \ref{prop:logpackingless2}.
\end{proof}

\section{Bourgain's result on sumsets is not enough to prove the Erdos conjecture for Cantor sets}

\begin{proposition} Let \((\rho_n)_{n\ge 1}\) be a sequence of positive real numbers such that \[ \rho_n \downarrow 0, \qquad \rho_{n+1}<\frac{\rho_n}{10}\quad\text{for all }n, \] and \[ 2^n(\log(1/\rho_n))^{-s}\longrightarrow 0 \qquad\text{for every }s>0. \] Then there exists a Cantor set \(K\subset (1,2)\) such that: \begin{enumerate} \item \(K\) is linearly independent over \(\mathbb{Q}\); \item for each \(n\), \(K\) is covered by \(2^n\) closed intervals of diameter at most \(\rho_n\); \item \(\dim_H^{\log}(K)=0\). \end{enumerate} In particular, there is a whole family of such sets, obtained by varying the sequence \((\rho_n)\). \end{proposition}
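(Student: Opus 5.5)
We build $K$ as a binary Cantor set $K=\bigcap_n K_n$. Here $K_n$ is a union of $2^n$ pairwise disjoint closed intervals $I_w$, indexed by words $w\in\{0,1\}^n$, with $|I_w|\le\rho_n$. Each $I_w$ contains its two children $I_{w0},I_{w1}$. Start with $K_0=I_\emptyset=[1+\tfrac14,1+\tfrac14+\rho_1]\subset(1,2)$; if $\rho_1$ is not small, first shrink it, which is harmless since all conditions are monotone. Property (2) then holds by construction. Property (3) follows at once from the hypothesis $2^n(\log(1/\rho_n))^{-s}\to0$. Indeed, for $s>0$ the cover $\{I_w\}_{|w|=n}$ gives
\[
\mathcal H^{h_s}_{\rho_n}(K)\le 2^n h_s(\rho_n)=2^n(\log(1/\rho_n))^{-s}\to0 .
\]
Hence $\mathcal H^{h_s}(K)=0$ for every $s>0$, and so $\dim^{\log}_H(K)=0$. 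The condition $\rho_{n+1}<\rho_n/10$ guarantees room: inside an interval of length about $\rho_n$ we can place two disjoint closed subintervals of length $\rho_{n+1}$ separated by a gap, with plenty of freedom in where to place them.

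The real work is (1), linear independence over $\mathbb Q$. We proceed as in Mycielski/Kuratowski-type constructions, handling one rational relation at a time. Enumerate all nonzero integer vectors $c=(c_1,\dots,c_k)$, over all $k$, as a sequence $c^{(1)},c^{(2)},\dots$. At stage $n$ we must ensure the following. For the current vector $c^{(n)}$ of length $k\le 2^n$, and for every choice of $k$ distinct words $w_1,\dots,w_k$ of length $n$, we want
\[
0\notin c_1I_{w_1}+\dots+c_kI_{w_k}.
\]
If $k>2^n$, postpone this vector to a later stage; because of the postponement, fix the enumeration so that each vector is handled at infinitely many stages, or at least at one stage where $2^n\ge k$.

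To achieve this, first choose the level-$n$ intervals from the level-$(n-1)$ ones. Pick points $x_w$ in the allowed regions. The function $(x_{w_1},\dots,x_{w_k})\mapsto\sum c_ix_{w_i}$ vanishes only on a hyperplane. There are finitely many such tuples, so we can perturb the centers $x_w$, within their parent intervals, so that all these finitely many sums are nonzero. Then shrink the radii, still keeping them at most $\rho_n$, so that by continuity the sums over the intervals stay away from $0$.

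Next we check that the stage-$n$ condition gives independence. Suppose $x_1,\dots,x_k\in K$ are distinct and $\sum c_ix_i=0$ with $c\ne0$. Choose $n$ large enough that the $x_i$ lie in distinct level-$n$ intervals, that $2^n\ge k$, and that $c$ is treated at stage $n$ or later. The condition at that stage, which persists under nesting, gives a contradiction. Repeated points are not an issue, since we may combine their coefficients. We must also exclude the relation $\sum c_ix_i=0$ only with $c\ne0$; linear independence over $\mathbb Q$ means exactly this, so no constant term arises.

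The main obstacle is the bookkeeping of the enumeration together with the growth constraint. We need every nonzero integer vector handled at some stage $n$ with $2^n\ge k$, using distinct words, while shrinking intervals only to lengths at most $\rho_n$. This is compatible because shrinking never violates (2) and (3): both are upper bounds on lengths and counts. Finally, $K$ is perfect, compact, and totally disconnected, so it is a Cantor set. Varying $(\rho_n)$, or the free choices made in the construction, gives the family.
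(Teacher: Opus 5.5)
Your plan is essentially the paper's proof. It uses a binary scheme of nested closed intervals with $\operatorname{diam} I_w\le\rho_n$. The level-$n$ centers are chosen generically inside the parents: the paper makes all $2^n$ centers $\mathbb Q$-linearly independent, and your avoidance of finitely many hyperplanes does the same job. The radii are then shrunk by continuity so that $0\notin c_1I_{w_1}+\dots+c_kI_{w_k}$ for distinct words, and independence is checked at a level where the given points are separated. Parts (2) and (3) are handled exactly as in the paper.

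The one thing to correct is your hedge ``or at least at one stage where $2^n\ge k$'', which is not enough.
\begin{itemize}
\item The condition imposed at a stage $n_0$ concerns only tuples of distinct level-$n_0$ words. By nesting, it covers only points lying in pairwise distinct level-$n_0$ intervals.
\item If two of your points share a level-$n_0$ interval, nothing done at stage $n_0$ rules out $\sum_i c_ix_i=0$.
\item Every level-$n_0$ interval contains infinitely many points of $K$, so this situation cannot be avoided by choosing $n_0$ cleverly in advance.
\end{itemize}
Your verification actually needs a stage $n'$ at which $c$ is treated \emph{and} which is at least the separation level of the given points. That is, $c$ must be treated at arbitrarily late stages. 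The phrase ``which persists under nesting'' does not cover the gap, because it only passes the condition down to descendants of distinct level-$n_0$ intervals.

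There are two ways to fix this. You can take your first option, where every vector recurs infinitely often in the enumeration. Alternatively, do as the paper does and impose at stage $n$ the conditions for all of $c^{(1)},\dots,c^{(n)}$ at once; this still involves only finitely many hyperplanes, and each relation is then enforced at every stage $n\ge k$. With either fix your argument is complete.
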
 

\begin{proof} For \(s>0\), let \[ h_s(r):=(\log(1/r))^{-s}\qquad (0<r<e^{-1}), \] and let \(\mathcal H^{h_s}\) be the Hausdorff measure associated to this gauge. By definition, \[ \dim_H^{\log}(E):=\inf\{s>0:\mathcal H^{h_s}(E)=0\}. \] We will construct \(K\) by a binary Cantor scheme. 

\medskip

For each \(m\ge 1\), let \[ \mathcal L_m = \left\{ L(x_1,\dots,x_m)=q_1x_1+\cdots+q_mx_m : q_1,\dots,q_m\in\mathbb Q\setminus\{0\} \right\}. \] Since \(\mathbb Q\) is countable, the set \[ \mathcal L:=\bigcup_{m\ge 1}\mathcal L_m \] is countable. Fix an enumeration \[ \mathcal L=\{L_1,L_2,L_3,\dots\}. \] For each \(k\), let \(m_k\) denote the number of variables of \(L_k\), so that \[ L_k(x_1,\dots,x_{m_k})=q_{k,1}x_1+\cdots+q_{k,m_k}x_{m_k}, \qquad q_{k,i}\in\mathbb Q\setminus\{0\}. \]

\medskip 

\textbf{Claim:} Let \(J_1,\dots,J_N\subset\mathbb R\) be nonempty open intervals. Then one can choose points \[ x_i\in J_i\qquad (1\le i\le N) \] such that the set \(\{x_1,\dots,x_N\}\) is linearly independent over \(\mathbb Q\). 

The claim is clear, since one can proceed inductively: Choose \(x_1\in J_1\setminus\{0\}\). Suppose \(x_1,\dots,x_{r-1}\) have already been chosen and are linearly independent over \(\mathbb Q\). Then \[ \operatorname{span}_{\mathbb Q}(x_1,\dots,x_{r-1}) \] is countable, because it consists of all finite rational linear combinations of the \(x_i\). Since \(J_r\) is an uncountable interval, we may choose \[ x_r\in J_r\setminus \operatorname{span}_{\mathbb Q}(x_1,\dots,x_{r-1}). \] Then \(x_1,\dots,x_r\) remain linearly independent over \(\mathbb Q\).

\medskip

For a finite binary word \(u\in\{0,1\}^n\), write \(|u|=n\). We will construct, for each finite binary word \(u\), a nonempty closed interval \(I_u\subset (1,2)\) such that the following properties hold: 

\begin{enumerate} 
\item[(a)] \(I_{\varnothing}\subset (1,2)\), where \(\varnothing\) denotes the empty word; 
\item[(b)] if \(u\in\{0,1\}^n\), then \(I_{u0}\) and \(I_{u1}\) are disjoint closed subintervals of \(\operatorname{int}(I_u)\); 
\item[(c)] if \(|u|=n\ge 1\), then \(\operatorname{diam}(I_u)\le \rho_n\); 
\item[(d)] for every \(n\ge 1\), every \(k\le n\), and every choice of pairwise distinct words \[ u_1,\dots,u_{m_k}\in\{0,1\}^n, \] we have \[ 0\notin L_k(I_{u_1}\times\cdots\times I_{u_{m_k}}). \] \end{enumerate} Once these intervals are constructed, define \[ K:=\bigcap_{n=1}^{\infty}\ \bigcup_{u\in\{0,1\}^n} I_u. \] We now perform the construction by induction on the level \(n\). 

\medskip \noindent\emph{Initial step.}

Choose any nondegenerate closed interval \[ I_{\varnothing}\subset (1,2). \] \medskip \noindent\emph{Inductive step.} Assume that all intervals \(I_u\) with \(|u|\le n-1\) have been constructed and satisfy (a)--(d) up to level \(n-1\). We explain how to define the intervals \(I_v\) with \(|v|=n\). For each word \(u\in\{0,1\}^{n-1}\), the interior \(\operatorname{int}(I_u)\) is a nonempty open interval. There are \(2^n\) words of length \(n\), namely the two children \(u0,u1\) of each \(u\in\{0,1\}^{n-1}\). Applying the lemma to these \(2^n\) open intervals, we may choose points \[ x_v\in \operatorname{int}(I_{\operatorname{par}(v)}) \qquad (v\in\{0,1\}^n), \] where \(\operatorname{par}(v)\) denotes the parent of \(v\), in such a way that the finite set \[ X_n:=\{x_v:v\in\{0,1\}^n\} \] is linearly independent over \(\mathbb Q\). Now fix \(k\le n\), and let \[ u_1,\dots,u_{m_k}\in\{0,1\}^n \] be pairwise distinct. Since \(X_n\) is linearly independent over \(\mathbb Q\), we have \[ L_k(x_{u_1},\dots,x_{u_{m_k}})\neq 0. \] 

There are only finitely many pairs \[ \bigl(k,(u_1,\dots,u_{m_k})\bigr) \] with \(k\le n\) and \(u_1,\dots,u_{m_k}\in\{0,1\}^n\) pairwise distinct. Therefore, by continuity of each linear form \(L_k\), we may choose \(\varepsilon_n>0\) so small that: 

\begin{enumerate} 
\item[(i)] if \(v,w\in\{0,1\}^n\) have the same parent and \(v\neq w\), then \[ |x_v-x_w|>4\varepsilon_n; \] 
\item[(ii)] for every \(v\in\{0,1\}^n\), \[ [x_v-\varepsilon_n,x_v+\varepsilon_n]\subset \operatorname{int}(I_{\operatorname{par}(v)}); \] 
\item[(iii)] \(2\varepsilon_n\le \rho_n\); 
\item[(iv)] for every \(k\le n\) and every pairwise distinct \[ u_1,\dots,u_{m_k}\in\{0,1\}^n, \] we have \[ 0\notin L_k\bigl([x_{u_1}-\varepsilon_n,x_{u_1}+\varepsilon_n]\times\cdots\times [x_{u_{m_k}}-\varepsilon_n,x_{u_{m_k}}+\varepsilon_n]\bigr). \] \end{enumerate} 

Now define \[ I_v:=[x_v-\varepsilon_n,x_v+\varepsilon_n] \qquad (v\in\{0,1\}^n). \] Then \(I_v\subset \operatorname{int}(I_{\operatorname{par}(v)})\) by (ii), sibling intervals are disjoint by (i), \(\operatorname{diam}(I_v)=2\varepsilon_n\le \rho_n\) by (iii), and property (d) holds at level \(n\) by (iv). This completes the induction.

\medskip \noindent
Let's see that \(K\) is homeomorphic to the standard Cantor space \(\{0,1\}^{\mathbb N}\).

Let \(\alpha=(\alpha_1,\alpha_2,\dots)\in\{0,1\}^{\mathbb N}\), and let \(\alpha|n\) be its initial word of length \(n\). Then \[ I_{\alpha|1}\supset I_{\alpha|2}\supset I_{\alpha|3}\supset\cdots \] is a nested sequence of nonempty compact intervals. Since \[ \operatorname{diam}(I_{\alpha|n})\le \rho_n\longrightarrow 0, \] the intersection \[ \bigcap_{n=1}^{\infty} I_{\alpha|n} \] consists of exactly one point. Denote this point by \(\pi(\alpha)\). Thus we obtain a map \[ \pi:\{0,1\}^{\mathbb N}\to K. \] It is injective, because if \(\alpha\neq \beta\), then for some \(n\) the words \(\alpha|n\) and \(\beta|n\) are distinct, and the corresponding level-\(n\) intervals are disjoint. It is surjective by the definition of \(K\). It is continuous, because if two sequences agree on the first \(n\) digits, then their images lie in the same interval \(I_u\) with \(|u|=n\), whose diameter is at most \(\rho_n\to 0\). Since \(\{0,1\}^{\mathbb N}\) is compact and \(\mathbb R\) is Hausdorff, \(\pi\) is a homeomorphism. Therefore \(K\) is compact, perfect, and totally disconnected; that is, \(K\) is a Cantor set. 

\medskip 

Take distinct points \(y_1,\dots,y_m\in K\) and nonzero rational numbers \(q_1,\dots,q_m\in\mathbb Q\). We will prove that \[ q_1y_1+\cdots+q_my_m\neq 0. \] Let \[ L(y_1,\dots,y_m):=q_1y_1+\cdots+q_my_m. \] Then \(L=L_k\) for some \(k\). Because the points \(y_1,\dots,y_m\) are distinct and because the diameters of the level-\(n\) intervals tend to \(0\), there exists \(n\ge k\) such that the points \(y_1,\dots,y_m\) lie in pairwise distinct level-\(n\) intervals \[ I_{u_1},\dots,I_{u_m}. \] Property (d) at level \(n\) gives \[ 0\notin L_k(I_{u_1}\times\cdots\times I_{u_m}). \] Since \(y_i\in I_{u_i}\) for each \(i\), it follows that \[ L(y_1,\dots,y_m)\neq 0. \] Thus \[ q_1y_1+\cdots+q_my_m\neq 0. \] So \(K\) is linearly independent over \(\mathbb Q\). 

\medskip 
Let's see that the set has logarithmic dimension $0$.

Fix \(s>0\). At level \(n\), the set \(K\) is covered by the \(2^n\) intervals \(\{I_u:|u|=n\}\), and each such interval has diameter at most \(\rho_n\). Therefore \[ \mathcal H^{h_s}_{\rho_n}(K) \le \sum_{|u|=n} h_s(\operatorname{diam}(I_u)) \le 2^n h_s(\rho_n) = 2^n(\log(1/\rho_n))^{-s}. \] By assumption, \[ 2^n(\log(1/\rho_n))^{-s}\longrightarrow 0. \] Hence \(\mathcal H^{h_s}(K)=0\) for every \(s>0\), and therefore \[ \dim_H^{\log}(K)=0. \] This completes the proof. \end{proof} \begin{corollary} There exists a Cantor set \(K\subset\mathbb R\) which is linearly independent over \(\mathbb Q\) and satisfies \[ \dim_H^{\log}(K)=0. \] \end{corollary}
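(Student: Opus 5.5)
The corollary follows directly from the preceding proposition once a sequence $(\rho_n)$ satisfying its hypotheses is exhibited. The plan is to choose $\rho_n$ super-exponentially small in the logarithmic scale, for instance
\[
\rho_n := e^{-e^{n^2}} \qquad (n\ge 1).
\]
This sequence is positive and decreases to $0$, so three hypotheses remain to be checked.

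First, I check the gap condition $\rho_{n+1}<\rho_n/10$. It is equivalent to
\[
e^{(n+1)^2}-e^{n^2}>\log 10.
\]
For $n\ge 1$ the left-hand side is at least $e^4-e>\log 10$, so the condition holds for every $n$.

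Second, I check the decay condition. For each $s>0$ we have
\[
2^n\bigl(\log(1/\rho_n)\bigr)^{-s}=2^n e^{-sn^2}=e^{n\log 2-sn^2}\longrightarrow 0 .
\]
This is the only place a computation is needed, and the quadratic exponent is chosen precisely so that it beats $2^n$ for every fixed $s>0$.

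Finally, I apply the proposition with this $(\rho_n)$. It produces a Cantor set $K\subset(1,2)\subset\mathbb R$ that is linearly independent over $\mathbb Q$ and satisfies $\dim_H^{\log}(K)=0$, which is exactly the statement of the corollary.

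I do not expect any real obstacle. The only substantive step is choosing $\rho_n$ so that $\log(1/\rho_n)$ grows faster than every exponential $c^n$, which is what the hypothesis ``for every $s>0$'' requires. Any choice with $\log(1/\rho_n)=e^{n^2}$, or faster growth, works equally well.
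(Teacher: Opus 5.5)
Your proposal is correct and matches the paper's proof: it applies the preceding proposition with $\rho_n=\exp(-e^{n^2})$ and checks $2^n(\log(1/\rho_n))^{-s}=2^n e^{-sn^2}\to 0$. You verify $\rho_{n+1}<\rho_n/10$ for every $n\ge 1$, whereas the paper states it only for sufficiently large $n$; yours is slightly more careful, since the proposition assumes the gap condition for all $n$.
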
 \begin{proof} Apply the proposition with \[ \rho_n:=\exp(-e^{n^2}). \] Then \(\rho_n\downarrow 0\), and for all sufficiently large \(n\), \[ \rho_{n+1}<\frac{\rho_n}{10}. \] Also, for every \(s>0\), \[ 2^n(\log(1/\rho_n))^{-s} = 2^n e^{-sn^2}\longrightarrow 0. \] Hence the proposition applies. 
\end{proof}
\medskip

\begin{remark} The proposition gives a whole family of examples. Indeed, any sequence \((\rho_n)\) tending to \(0\) sufficiently fast and satisfying \[ 2^n(\log(1/\rho_n))^{-s}\to 0 \qquad\text{for every }s>0 \] produces such a Cantor set. For example, one may take \[ \rho_n=\exp(-e^{n^\alpha})\qquad (\alpha>1), \] or \[ \rho_n=\exp(-e^{2^n}), \] or many other superlogarithmically decaying scales. \end{remark}

\begin{proposition} Let \(K\subset\mathbb R\) be linearly independent over \(\mathbb Q\). Then there do not exist infinite sets \(S_1,S_2,S_3\subset\mathbb R\) such that \[ S_1+S_2+S_3\subset K. \] In particular, none of the Cantor sets constructed above can contain a set of the form \(S_1+S_2+S_3\) with each \(S_i\) infinite. 
\end{proposition}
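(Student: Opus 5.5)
We argue by contradiction: suppose $S_1,S_2,S_3$ are infinite and $S_1+S_2+S_3\subset K$. The idea is to exhibit four elements of $K$ that satisfy a nontrivial rational linear relation and are pairwise distinct. Since $K$ is linearly independent over $\mathbb Q$, no such relation can exist.

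The relation comes from the parallelogram identity. Choose distinct $a,a'\in S_1$ and distinct $b,b'\in S_2$, and fix any $c\in S_3$. Then
\[
(a+b+c)+(a'+b'+c)-(a+b'+c)-(a'+b+c)=0 .
\]
Write $y_1=a+b+c$, $y_2=a'+b'+c$, $y_3=a+b'+c$ and $y_4=a'+b+c$; all four lie in $K$.

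We now check which of these points can coincide. We have $y_1\neq y_3$ because $b\neq b'$, and $y_1\neq y_4$ because $a\neq a'$. Similarly $y_2\neq y_3$ since $a'\neq a$, and $y_2\neq y_4$ since $b'\neq b$.

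The only possible coincidences are therefore $y_1=y_2$ or $y_3=y_4$. Each of these is equivalent to $a-a'=b'-b$ or $a-a'=b-b'$, that is, to $b'-b=\pm(a-a')$.

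The main step is to avoid these coincidences, and this is where infiniteness is used. Fix distinct $a,a'\in S_1$ and fix $b\in S_2$. The condition $b'-b=\pm(a-a')$ excludes at most two values of $b'$, namely $b\pm(a-a')$. Since $S_2$ is infinite, we can choose $b'\in S_2\setminus\{b,\,b+(a-a'),\,b-(a-a')\}$.

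With this choice the four points $y_1,y_2,y_3,y_4$ are pairwise distinct elements of $K$. They satisfy $y_1+y_2-y_3-y_4=0$, a rational linear relation with all coefficients nonzero. This contradicts the linear independence of $K$ over $\mathbb Q$, so no such infinite $S_1,S_2,S_3$ exist.

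In fact the argument only needs $S_1$ to have two elements, $S_2$ to be infinite (three elements already suffice), and $S_3$ to be nonempty. The final clause of the statement then follows from the preceding Proposition, since the Cantor sets constructed there are linearly independent over $\mathbb Q$.
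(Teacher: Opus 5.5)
Your proof is correct and is essentially the paper's argument: both take four triple sums in $K$ satisfying the additive-quadruple relation $y_1+y_2=y_3+y_4$ and contradict linear independence over $\mathbb Q$. The only difference is in how coincidences are handled. The paper fixes $a\in S_1$, varies two elements each of $S_2$ and $S_3$, and, if $x_{11}=x_{22}$, falls back on the three-term relation $2x_{11}=x_{12}+x_{21}$; you instead use the infiniteness of $S_2$ to pick $b'$ outside $\{b,\,b\pm(a-a')\}$, so the four points are distinct from the outset.
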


\begin{proof} Suppose for contradiction that there exist infinite sets \(S_1,S_2,S_3\subset\mathbb R\) such that \[ S_1+S_2+S_3\subset K. \] Since \(S_1, \, S_2, S_3\) are infinite, we can choose \(a\in S_1\), choose two distinct elements \[ b_1,b_2\in S_2,\qquad b_1\neq b_2, \] and two distinct elements \[ c_1,c_2\in S_3,\qquad c_1\neq c_2. \] Define \[ x_{11}=a+b_1+c_1,\qquad x_{12}=a+b_1+c_2,\qquad x_{21}=a+b_2+c_1,\qquad x_{22}=a+b_2+c_2. \] By assumption, each \(x_{ij}\in K\). These four numbers satisfy the identity \[ x_{11}+x_{22}=x_{12}+x_{21}, \] because both sides are equal to \[ 2a+b_1+b_2+c_1+c_2. \] Equivalently, \[ x_{11}-x_{12}-x_{21}+x_{22}=0. \] We claim that the four numbers \(x_{11},x_{12},x_{21},x_{22}\) are pairwise distinct. Indeed: \[\text{If } x_{11}=x_{12}\ \Longrightarrow\ c_1=c_2, \] which is impossible. \[\text{If } x_{11}=x_{21}\ \Longrightarrow\ b_1=b_2, \] which is impossible. \[\text{If } x_{11}=x_{22}\ \Longrightarrow\ b_1+c_1=b_2+c_2. \] But then the identity \[ x_{11}+x_{22}=x_{12}+x_{21} \] would imply \[ 2x_{11}=x_{12}+x_{21}. \] Since \(x_{12}\neq x_{21}\) (otherwise \(b_1+c_2=b_2+c_1\), hence \(b_1-b_2=c_1-c_2\), and together with \(b_1+c_1=b_2+c_2\) this gives \(b_1=b_2\), contradiction), this is a nontrivial rational linear relation among distinct points of \(K\), impossible. Hence \(x_{11}\neq x_{22}\). The remaining equalities are similar. Thus \(x_{11},x_{12},x_{21},x_{22}\) are four distinct points of \(K\), and they satisfy the nontrivial rational linear relation \[ x_{11}-x_{12}-x_{21}+x_{22}=0. \] This contradicts the assumption that \(K\) is linearly independent over \(\mathbb Q\). Therefore no such infinite sets \(S_1,S_2,S_3\) can exist. \end{proof}

\begin{corollary} There exists a whole family of Cantor sets \(K\subset\mathbb R\) such that: 
\begin{enumerate} \item \(K\) is linearly independent over \(\mathbb Q\); \item \(\dim_H^{\log}(K)=0\); \item there do not exist infinite sets \(S_1,S_2,S_3\subset\mathbb R\) with \[ S_1+S_2+S_3\subset K. \] \end{enumerate} 
\end{corollary}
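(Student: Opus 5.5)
This corollary is obtained by combining the two preceding propositions; no new construction is needed. First, apply the first proposition of this section to any sequence $(\rho_n)$ with $\rho_n\downarrow 0$, $\rho_{n+1}<\rho_n/10$, and $2^n(\log(1/\rho_n))^{-s}\to 0$ for every $s>0$. This produces a Cantor set $K\subset(1,2)$ that is linearly independent over $\mathbb Q$ and satisfies $\dim_H^{\log}(K)=0$. This gives properties (1) and (2).

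Second, property (3) follows directly from the second proposition. That proposition states that no $\mathbb Q$-linearly independent set contains $S_1+S_2+S_3$ with each $S_i$ infinite. Its proof uses only the additive relation $x_{11}-x_{12}-x_{21}+x_{22}=0$ among four distinct elements of $K$.

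Third, to justify the phrase ``whole family'', I would vary the scale sequence. Admissible choices include $\rho_n=\exp(-e^{n^\alpha})$ for $\alpha>1$ and $\rho_n=\exp(-e^{2^n})$, as in the preceding corollary and remark. For each such choice I check the two hypotheses.
\begin{itemize}
\item The decay condition holds for large $n$, since $\log(1/\rho_{n+1})-\log(1/\rho_n)\to\infty$.
\item The logarithmic condition holds because $2^n e^{-sn^\alpha}\to 0$.
\end{itemize}
If the decay inequality fails for finitely many initial indices, I would discard those indices and reindex.

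Since there are infinitely many admissible sequences, there are infinitely many such sets. To see they are genuinely different, note that the construction also allows free choice of the initial interval $I_{\varnothing}$ and of the points at each stage. Taking disjoint initial intervals gives pairwise disjoint Cantor sets.

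The only step needing any care is confirming that each displayed sequence meets the strict decay hypothesis, or that finitely many initial exceptions are harmless. This is a routine check.
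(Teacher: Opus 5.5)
Your proposal is correct and matches the paper, where the corollary has no separate proof and is simply the combination of two earlier results. Properties (1) and (2) come from the construction proposition applied to an admissible scale sequence $(\rho_n)$, property (3) comes from the proposition that a $\mathbb{Q}$-linearly independent set contains no $S_1+S_2+S_3$ with infinite $S_i$, and the ``family'' comes from varying $(\rho_n)$ as in the remark. Your extra point that pairwise disjoint initial intervals $I_{\varnothing}\subset(1,2)$ give pairwise disjoint examples is a good addition: the construction only imposes the upper bounds $\operatorname{diam}(I_u)\le\rho_n$, so varying $(\rho_n)$ alone does not obviously produce distinct sets.
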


\section{A large family of Cantor sets of logarithmic Hausdorff dimension zero containing a triple sumset} 
\label{section:positiveexamples}

The purpose of this section is to show that variants of the above construction can have extremely small logarithmic Hausdorff dimension and contain a triple infinite sumset (then verifying the Erdős conjecture), further emphasizing that the mechanism developed in this paper lies outside the positive logarithmic dimension regime.

We use the logarithmic Hausdorff dimension associated to the gauges \[ h_s(r):=(\log(1/r))^{-s}\qquad (0<r<e^{-1},\ s>0), \] and \[ \dim_H^{\log}(E):=\inf\{s>0:\mathcal H^{h_s}(E)=0\}. \] This is the logarithmic Hausdorff dimension considered by Shmerkin and Yavicoli. \cite{ShmerkinYavicoli2025} \begin{proposition}\label{prop:main-triple-sum} Let \((a_n)_{n\ge 1}\) be a sequence of positive real numbers such that \begin{equation}\label{eq:separation} a_n>\sum_{m>n} a_m \qquad\text{for every }n\ge 1, \end{equation} and \begin{equation}\label{eq:log-zero-condition} 2^n\Bigl(\log\frac{1}{\tau_n}\Bigr)^{-s}\longrightarrow 0 \qquad\text{for every }s>0, \end{equation} where \[ \tau_n:=\sum_{m>n} a_m. \] Define \[ K:=\left\{\sum_{n=1}^{\infty}\varepsilon_n a_n:\ \varepsilon_n\in\{0,1\}\ \text{for all }n\right\}. \] Then the following hold: \begin{enumerate} \item \(K\subset \mathbb R\) is a Cantor set. \item \(\dim_H^{\log}(K)=0\). \item There exist three countably infinite sets \(S_1,S_2,S_3\subset K\) such that \[ S_1+S_2+S_3\subset K. \] Equivalently, there exist three infinite sequences \((x_n^{(1)})_{n\ge 1}\), \((x_n^{(2)})_{n\ge 1}\), \((x_n^{(3)})_{n\ge 1}\) such that \[ \{x_n^{(1)}:n\ge 1\}+\{x_n^{(2)}:n\ge 1\}+\{x_n^{(3)}:n\ge 1\}\subset K. \] \end{enumerate} Hence there is a whole family of Cantor sets in \(\mathbb R\) of logarithmic Hausdorff dimension zero that contain a triple sumset of three infinite sequences, then satisfying the Erdős conjecture. \end{proposition}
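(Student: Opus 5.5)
Since $K$ is the set of subsums of a super-separated series, I would code it by $\{0,1\}^{\mathbb N}$ and treat the three claims separately.

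\emph{Cantor structure.} For a word $u=(\varepsilon_1,\dots,\varepsilon_n)$ set $I_u=[\sum_{m\le n}\varepsilon_m a_m,\ \sum_{m\le n}\varepsilon_m a_m+\tau_n]$. Then $K\cap I_u$ is the set of points whose first $n$ digits are $u$, and $I_{u0},I_{u1}\subset I_u$. The two children are disjoint because $I_{u1}$ starts at the left endpoint of $I_{u0}$ plus $a_{n+1}$, while $I_{u0}$ has length $\tau_{n+1}<a_{n+1}$ by \eqref{eq:separation}. Hence the coding map $\pi:\{0,1\}^{\mathbb N}\to K$ is injective. It is continuous since $\operatorname{diam} I_u=\tau_n\to0$; $\tau_n\to 0$ as the tail of a convergent series (convergence follows from \eqref{eq:separation} with $n=1$). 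So $\pi$ is a homeomorphism from a compact space, exactly as in the proof of the linearly independent Cantor set proposition above, and $K$ is a Cantor set.

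\emph{Dimension.} Fix $s>0$. The $2^n$ intervals $I_u$, $|u|=n$, cover $K$ and have diameter $\tau_n$, so
\[
\mathcal H^{h_s}_{\tau_n}(K)\le 2^n h_s(\tau_n)=2^n\Bigl(\log\frac1{\tau_n}\Bigr)^{-s}\to0
\]
by \eqref{eq:log-zero-condition}. Therefore $\mathcal H^{h_s}(K)=0$ for every $s>0$, and $\dim_H^{\log}(K)=0$.

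\emph{Triple sumset.} This is the only real step. Partition $\mathbb N$ into three infinite disjoint sets $P_1,P_2,P_3$, for instance the residue classes mod $3$, and let $S_j=\{a_n:n\in P_j\}$. Each $a_n=\pi(e_n)\in K$, so $S_j\subset K$ is countably infinite. An element of $S_1+S_2+S_3$ has the form $a_{n_1}+a_{n_2}+a_{n_3}$ with $n_j\in P_j$. The indices are automatically distinct, so this is a finite subsum of the $a_n$, hence lies in $K$.

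The point to stress is that the disjointness of the index sets is what prevents repeated terms such as $2a_n$, which need not lie in $K$. This is why the statement uses three different sets rather than $A+A+A$. With this formulation there is no real obstacle; one only has to invoke Bourgain's theorem \cite{Bourgain2003} in its form for $S_1+S_2+S_3$ with infinite $S_j$, which is the version that yields the Erd\H{o}s conclusion.
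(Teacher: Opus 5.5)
Your proposal is correct and follows the paper's proof essentially step by step. You code $K$ by $\{0,1\}^{\mathbb N}$ with injectivity coming from $a_{k}>\tau_{k}$, cover $K$ by the $2^n$ level-$n$ cylinders of diameter at most $\tau_n$ to get $\mathcal H^{h_s}(K)=0$, and partition $\mathbb N$ into three disjoint infinite index classes for the sumset. The only cosmetic difference is your choice $S_j=\{a_n:n\in P_j\}$, which is infinite because $a_n>\tau_n\ge a_{n+1}$ makes the $a_n$ distinct. The paper instead takes all finite subsums $\sum_{n\in F}a_n$ with $F\subset A_j$ finite. Both work for the same reason: disjoint index sets make $x_1+x_2+x_3$ a $0$--$1$ subsum of the $a_n$.
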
 

\begin{proof}
\medskip 
Let's see that the series defining \(K\) converges, so \(K\) is well defined.

Since the numbers \(a_n\) are positive and satisfy \eqref{eq:separation}, we have \[ a_n>\sum_{m>n}a_m=\tau_n \] for every \(n\). In particular the series $\sum a_n$ converges absolutely, so the series \[ \sum_{n=1}^{\infty}\varepsilon_n a_n \] converges absolutely. 

\medskip 
Let's see that $K$ is a Cantor set.

Consider the map \[ \pi:\{0,1\}^{\mathbb N}\to \mathbb R, \qquad \pi((\varepsilon_n)_{n\ge 1})=\sum_{n=1}^{\infty}\varepsilon_n a_n. \] Since \(\sum_n a_n<\infty\), the series converges uniformly with respect to \((\varepsilon_n)\in\{0,1\}^{\mathbb N}\). Therefore \(\pi\) is continuous. Its image is exactly \(K\). Since \(\{0,1\}^{\mathbb N}\) is compact, it follows already that \(K\) is compact. We now prove that \(\pi\) is injective. Let \((\varepsilon_n)\neq (\eta_n)\) in \(\{0,1\}^{\mathbb N}\), and let \(k\) be the first index at which they differ. Then \[ \varepsilon_j=\eta_j\quad (j<k), \qquad \varepsilon_k-\eta_k\in\{\pm 1\}. \] Hence \[ \pi((\varepsilon_n))-\pi((\eta_n)) = (\varepsilon_k-\eta_k)a_k+\sum_{m>k}(\varepsilon_m-\eta_m)a_m. \] Taking absolute values and using \(|\varepsilon_m-\eta_m|\le 1\), we obtain \[ \bigl|\pi((\varepsilon_n))-\pi((\eta_n))\bigr| \ge a_k-\sum_{m>k}a_m = a_k-\tau_k >0 \] by \eqref{eq:separation}. Therefore \(\pi((\varepsilon_n))\neq \pi((\eta_n))\), so \(\pi\) is injective. Since \(\{0,1\}^{\mathbb N}\) is compact and \(\mathbb R\) is Hausdorff, the continuous bijection \(\pi:\{0,1\}^{\mathbb N}\to K\) is a homeomorphism. Thus \(K\) is homeomorphic to the standard Cantor space \(\{0,1\}^{\mathbb N}\). In particular, \(K\) is a Cantor set, i.e. it is compact, perfect, and totally disconnected. 

\medskip 
Now, let's look at the cylinder sets and their diameters. For a finite word \(u=(u_1,\dots,u_n)\in\{0,1\}^n\), define the level-\(n\) cylinder \[ K_u := \left\{ \sum_{j=1}^{n}u_j a_j+\sum_{m>n}\varepsilon_m a_m:\ \varepsilon_m\in\{0,1\} \right\}. \] Then \[ K=\bigcup_{u\in\{0,1\}^n} K_u \qquad\text{for every }n. \] There are exactly \(2^n\) such cylinders. Fix \(u\in\{0,1\}^n\). If \(x,y\in K_u\), then the first \(n\) digits of \(x\) and \(y\) agree, so \[ x-y=\sum_{m>n} (\varepsilon_m-\eta_m)a_m \] for suitable \(\varepsilon_m,\eta_m\in\{0,1\}\). Therefore \[ |x-y| \le \sum_{m>n} a_m =\tau_n. \] Hence \[ \operatorname{diam}(K_u)\le \tau_n. \] So for every \(n\), the set \(K\) is covered by \(2^n\) sets of diameter at most \(\tau_n\). 

\medskip 
Let's prove that the logarithmic Hausdorff dimension of \(K\) is zero.

Fix \(s>0\). Let \(h_s(r)=(\log(1/r))^{-s}\). Since \(K\) is covered by the \(2^n\) cylinders \(K_u\) of level \(n\), each having diameter at most \(\tau_n\), we obtain \[ \mathcal H^{h_s}_{\tau_n}(K) \le \sum_{u\in\{0,1\}^n} h_s(\operatorname{diam}(K_u)) \le 2^n h_s(\tau_n) = 2^n\Bigl(\log\frac{1}{\tau_n}\Bigr)^{-s}. \] By assumption \eqref{eq:log-zero-condition}, \[ 2^n\Bigl(\log\frac{1}{\tau_n}\Bigr)^{-s}\longrightarrow 0. \] Therefore \[ \mathcal H^{h_s}(K)=0 \qquad\text{for every }s>0. \] By the definition of logarithmic Hausdorff dimension, \[ \dim_H^{\log}(K)=0. \] 

\medskip 

It remains to check that there are three infinite sets whose triple sumset lies in \(K\).

Choose a partition of \(\mathbb N\) into three pairwise disjoint infinite sets: \[ \mathbb N=A_1\sqcup A_2\sqcup A_3. \] For \(i=1,2,3\), define \[ S_i:= \left\{ \sum_{n\in F} a_n:\ F\subset A_i,\ F\ \text{finite} \right\}. \] We claim that each \(S_i\) is countably infinite. First, \(S_i\) is countable because the family of finite subsets of the countable set \(A_i\) is countable. Second, \(S_i\) is infinite because \(A_i\) is infinite and for each \(n\in A_i\), \[ a_n=\sum_{m\in \{n\}} a_m\in S_i. \] Since the \(a_n\) are all distinct positive numbers, this gives infinitely many distinct elements of \(S_i\). Now let \(x_i\in S_i\). Then there exist finite sets \(F_i\subset A_i\) such that \[ x_i=\sum_{n\in F_i} a_n \qquad (i=1,2,3). \] Because the sets \(A_1,A_2,A_3\) are pairwise disjoint, the finite sets \(F_1,F_2,F_3\) are pairwise disjoint as well. Hence \[ x_1+x_2+x_3 = \sum_{n\in F_1\cup F_2\cup F_3} a_n. \] This is a sum with coefficients \(0\) or \(1\), so it belongs to \(K\). Therefore \[ S_1+S_2+S_3\subset K. \] Finally, since each \(S_i\) is countably infinite, it can be enumerated as a sequence. Thus there exist infinite sequences \((x_n^{(i)})_{n\ge 1}\) such that \[ \{x_n^{(1)}:n\ge 1\}+\{x_n^{(2)}:n\ge 1\}+\{x_n^{(3)}:n\ge 1\}\subset K. \] 
\end{proof}

\begin{remark}\label{rem:many-examples} Proposition \ref{prop:main-triple-sum} gives a large family of examples satisfying the Erdős conjecture. For instance, if \[ a_n=\exp(-e^{n^2}), \] then \(\sum_{m>n}a_m<a_n\) for every sufficiently large \(n\), and after modifying finitely many initial terms one gets a sequence satisfying \eqref{eq:separation}. Moreover, for every \(s>0\), \[ 2^n\Bigl(\log\frac{1}{\tau_n}\Bigr)^{-s}\to 0, \] because \(\tau_n\) is comparable to \(a_{n+1}\) for such rapidly decreasing sequences. The same is true for many other super-fast decays, for example \[ a_n=\exp(-e^{n^\alpha})\qquad (\alpha>1), \] again after adjusting finitely many initial terms if necessary. \end{remark}

\end{document}